\newtheorem{theorem}{Theorem}[section]
\newtheorem{proposition}[theorem]{Proposition}
\newtheorem{corollary}[theorem]{Corollary}
\newtheorem{lemma}[theorem]{Lemma}
\theoremstyle{definition}
\newtheorem{remark}[theorem]{Remark}
\newtheorem{assumption}[theorem]{Assumption}
\def\div{\mathop{\mathrm{div}}\nolimits}
\def\R{\mathbb R}
\def\vep{\varepsilon}
\def\A{\mathcal A}
\def\S{\mathcal S}
\def\H{\mathbb H}
\def\Hc{\mathcal{H}}
\def\J{\mathbb J}
\def\x{\mathbf x}
\def\X{\mathbf X}
\def\K{\mathsf{K}}
\def\Q{\mathsf{Q}}
\def\Tr{\mathrm{Tr}}
\def\B{\mathcal{B}}
\def\D{\mathcal{D}}
\def\Y{\mathcal{Y}}
\def\Qc{\mathcal{Q}}
\def\Qu{\hat{Q}}
\def\Pu{\hat{P}}
\def\Zu{\hat{Z}}
\def\Wu{\hat{W}}
\def\L{\mathscr{L}}
\def\qu{\hat{q}}
\def\pu{\hat{p}}
\def\zu{\hat{z}}
\def\aL{\mathsf L}
\def\aM{\mathsf M}
\def\aE{\mathsf E}
\def\aS{\mathsf S}
\def\aZ{\mathsf Z}
\def\az{\mathsf z}
\def\aF{\mathsf F}
\def\aG{\mathsf G}
\title{Mean field limits for non-Markovian interacting particles: convergence to equilibrium, GENERIC formalism, asymptotic limits and phase transitions
}
\author{M. H. Duong\thanks{m.duong@imperial.ac.uk}~ and G. A. Pavliotis\thanks{g.pavliotis@imperial.ac.uk}}
\affil{Department of Mathematics, Imperial College London, London SW7 2AZ, UK.}
\begin{document}
\maketitle

\begin{abstract}

In this paper, we study the mean field limit of weakly interacting particles with memory that are governed by a system of non-Markovian Langevin equations. Under the assumption of quasi-Markovianity (i.e. that the memory in the system can be described using a {\bf finite} number of auxiliary processes), we pass to the mean field limit to obtain the corresponding McKean-Vlasov equation in an extended phase space. For the case of a quadratic confining potential and a quadratic (Curie-Weiss) interaction, we obtain the fundamental solution (Green's function). For nonconvex confining potentials, we characterize the stationary state(s) of the McKean-Vlasov equation, and we show that the bifurcation diagram of the stationary problem is independent of the memory in the system. In addition, we show that the McKean-Vlasov equation for the non-Markovian dynamics can be written in the GENERIC formalism and we study convergence to equilibrium and the Markovian asymptotic limit.

\end{abstract}

\section{Introduction}

Many systems in nature and in applications can be modeled using systems of interacting particles--or agents--that are possibly subject to thermal noise. Standard examples include plasma physics and galactic 
dynamics~\cite{BinneyTremaine2008}; more recent applications are dynamical density functional theory~\cite{GodPavlal-2012, GodPavlKall11}, mathematical biology~\cite{Farkhooi2017,Lucon2016} and  even mathematical models in the social sciences~\cite{GPY2017,Motsch2014}. As examples of models of interacting agents in a noisy environment that appear in the social sciences we mention the modelling of cooperative behavior~\cite{Dawson1983}, risk management~\cite{GPY2013} and  opinion formation~\cite{GPY2017}. 

For weakly interacting diffusions, i.e. when the strength of the interaction is inversely proportional to the number of particles, the mean field limit for the empirical measure has been studied rigorously, under quite general assumptions, see e.g.~\cite{DawsonGartner1987, gartner1988, oelschlager1984}. The mean field dynamics is described by the so-called McKean-Vlasov equation, which is a nonlinear, nonlocal Fokker-Planck type equation~\cite{McKean1966, McKean1967, BKRS2015}. This equation has been studied extensively and many things are known, including well-posedness of the evolution PDE~\cite{Chazelle2017}, convergence to equilibrium, (non)uniqueness of the invariant measure and phase transitions. For example, it is by now well known that, for nonconvex confining potentials and a quadratic (Curie-Weiss type) interaction term--the so-called Desai-Zwanzig model~\cite{DesaiZwanzig1978}, there exist more than one invariant measures at low temperatures \cite{Dawson1983, Shiino1987, Tugaut2014}. Similar results of nonuniqueness of  the stationary state at low temperatures have been also obtained for McKean-Vlasov equations modeling opinion formation~\cite{Chazelle_al2017, ChayesPanferov2010} as well as for the Desai-Zwanzig model in a two-scale potential~\cite{GomesPavliotis2018}.

Most works on the study of the McKean-Vlasov equation are concerned with the equation that is obtained in the mean field limit of weakly interacting overdamped or underdamped Langevin dynamics. The McKean-Vlasov equation corresponding to weakly interacting overdamped Langevin dynamics is a (nonlinear and nonlocal) uniformly elliptic parabolic PDE and many of its properties are by now well understood, see for instance~\cite{carrillo2003,Carrillo2006} and references therein. For weakly interacting underdamped Langevin diffusions, the corresponding McKean-Vlasov equation is not uniformly elliptic but, rather, hypoelliptic. However, many results are also known in this case, see for instance~\cite{DV2001, Villani2009} and references therein. A recent result that is particularly relevant for the present work is that the invariant measure(s) of the McKean-Vlasov equation in phase space have a product measure structure and, in particular that the overdamped and underdamped McKean-Vlasov dynamics exhibit the same phase transitions~\cite{DuongTugaut2016}. One of the goals of this paper is to extend this result to McKean-Vlasov equations corresponding to non-Markovian Langevin dynamics.

To our knowledge, the mean field limit of interacting particles that are driven by colored noise, or that have memory is much less studied. Both problems are quite interesting from a modeling perspective, given that noise in many physical systems exhibits a nontrivial (spatiotemporal) correlation structure, and that many interesting dynamical systems are non-Markovian. As examples where colored noise and non-Markovianity play an important role, we mention colloidal systems and polymer dynamics~\cite{snook07} and nonequilibrium systems and active matter~\cite{Fodor2016,Mandal2017}. The presence of non-white noise and of memory renders the analysis more complicated, since, even for a finite number of interacting agents, the corresponding Fokker-Planck equation does not have a gradient structure (for colored noise) and it is necessarily degenerate (for systems with memory~\cite{OttobrePavliotis11}). The main goal of this paper is to study the mean field limit and the properties of the resulting McKean-Vlasov equation for non-Markovian weakly interacting particle systems, under the assumption that the memory (or non-trivial correlation structure of the noise) can be described by means of a {\it finite} number of auxiliary processes--the so-called quasi-Markovian assumption~\cite{Pavl2014}[Ch. 8]. The study of mean field limits for weakly interacting particles driven by colored noise will be presented elsewhere.

Finite dimensional, and in particular, low dimensional stochastic systems with memory and or/colored noise have been studied extensively in the literature. We mention the work on noise induced transitions for Langevin equations driven by colored noise~\cite{HorsLef84, HanggiJung1995} and the study of the generalized Langevin equation, including the rigorous derivation of such non-Markovian equations for particles coupled to one or more heat baths~\cite{Rey-Bellet2006} and the references therein,~\cite{Pavl2014}[Ch. 8], the study of the ergodic properties of such systems~\cite{tropper1977}, of their long time behaviour~\cite{EPR99}, etc.

The starting point of the present work is the following particle system
\begin{subequations}\label{e:particles}
\begin{eqnarray}
dQ_i(t)&=&  P_i(t)\,dt
\\dP_i(t)&=&-\nabla V(Q_i(t))\,dt-\frac{1}{N}\sum_{j=1}^N\nabla_qU(Q_i(t)-Q_j(t))\,dt+\lambda^T Z_i(t)\,dt
\\ dZ_i(t)&=&-\lambda P_i(t) \,dt-AZ_i(t)+\sqrt{2\beta^{-1}A}dW_i(t).
\end{eqnarray}
\end{subequations}
Here, $i=1,\dots, N$, $(Q_i,P_i,Z_i)\in\R^d\times\R^d\times\R^{dm}=:\mathbf{X}$; $\lambda\in\R^{dm\times d}$ and $A\in\R^{dm\times dm}$ are given matrices with $A$ being symmetric positive definite; $\beta>0$ is the inverse temperature; and $V, U:\R^d\rightarrow\R$ are confining and interaction potentials, respectively.

This diffusion process in the extended phase space $\mathbf{X}$ is equivalent to the system of weakly interacting generalized Langevin equations, 
\begin{equation}\label{e:gle-inter}
\ddot{q}_i = -\frac{\partial V}{\partial q_i} - \frac{1}{N} \sum_{j=1}^N U'(q_i - q_j) + \sum_{j=1}^N \gamma_{i j} (t-s) \, \dot{q}_j(s) \, ds + F_i(t), \quad i =1,\dots N,
\end{equation}
where $F(t) = (F_1(t), \dots F_N(t))$ is a mean zero, Gaussian, stationary process with autocorrelation function
$$
E (F_i(t)  \, F_j(s)) = \beta^{-1} \gamma_{i j}(t-s).
$$
We can obtain the Markovian dynamics~\eqref{e:particles} from the generalized Langevin dynamics~\eqref{e:gle-inter}, under the assumption that the autocorrelation functions $[\gamma_{i j}(t-s)]_{i,j=1,\ldots,m}$ are given by a linear combination of exponential functions~\cite{OttobrePavliotis11}. In this case, the memory of the system can be described by adding a finite number of auxiliary statinary Ornstein-Uhlenbeck processes. Approximating arbitrary Gaussian stationary processes by a Markovian process in a rigorous, systematic and algorithmic way is still an open problem, related to the so-called stochastic realization problem~\cite{DymMcKean1976, LindquistPicci1985}, see also~\cite{Kup03}. A recent approach to this problem, based on the representation of the noise by an infinite dimensional Ornstein-Uhlenbeck process is presented in~\cite{Nguyen2018}, see also~\cite{OttobrePhD}. We plan to return to the combined Markovian approximation--mean field limit in future work.

It is possible to study the hydrodynamic (mean field) limit, i.e. to show that the empirical measure 
$$
\rho_N:=\frac{1}{N}\sum_{i=1}^N \delta_{(Q_i(t),P_i(t),Z_i(t))}
$$
converges to the solution of 
\begin{equation}
\label{eq: GLMV}
\partial_t \rho=-\div_q(p\rho)+\div_p\Big[(\nabla_q V(q)+\nabla_q U\ast\rho(q)-\lambda^T z)\rho\Big]+\div_z\Big[(p \lambda +Az)\rho\Big]+\beta^{-1}\div_z(A\nabla_z\rho).
\end{equation}
We will refer to~Equation~\eqref{eq: GLMV} as the \textit{generalized  McKean-Vlasov equation (GLMV)} for $\rho=\rho(t,q,p,z)$. It is  the forward Kolmogorov (Fokker-PLanck) equation associated to the following generalized McKean-Vlasov SDE
\begin{subequations}\label{e:mckean}
\begin{eqnarray}
dQ(t)&=&P(t)\,dt,
\\dP(t)&=&-\nabla V(Q(t))\,dt-\nabla_q U\ast\rho_t(Q_t)\,dt+\lambda^T Z(t)\,dt,
\\ dZ(t)&=&-P(t) \lambda\,dt-AZ(t)+\sqrt{2\beta^{-1}A}dW(t).
\end{eqnarray}
\end{subequations}
where $\rho_t=\mathrm{Law}(Q(t),P(t),Z(t))$. The rigorous passage from the particle approximation~\eqref{e:particles} to the generalized McKean-Vlasov SDE~\eqref{e:mckean} and the mean field limit for small Lipschitz interactions, can be justified, in principle,  using the coupling approach developed in~\cite{BolleyGuillinMalrieu2010}, see also~\cite{Duong2015NA}. The smallness restriction on the interaction can be removed using the results presented in~\cite{Monmarche2017}. These papers consider the case of interacting underdamped Langevin dynamics, but it should be possible to use similar techniques in order to study rigorously the mean field limit for the generalized Langevin interacting dynamics. We mention that the system interacting particles that we study in this paper is similar to systems of interacting nonlinear oscillators, coupled to two heat baths at different temperatures~\cite{EPR99}. Two related equations that also play an important role in this work are the \textit{overdamped McKean-Vlasov equation},
\[
\partial_t \rho=\div\Big[(\nabla V+\nabla U\ast\rho)\rho\Big]+\beta^{-1}\Delta\rho,
\]
and the \textit{underdamped McKean-Vlasov equation}
\[
\partial_t \rho=-\div_q(p\rho)+\div_p\Big[(\nabla_q V(q)+\nabla_q U\ast\rho(q))\rho\Big]+\gamma\div_p\Big[p \rho\Big]+\beta^{-1}\gamma\Delta_p\rho.
\]
The generalized Langevin equation, i.e. Equation~\eqref{e:mckean} in the absence of an interaction potential ($U\equiv 0$), has been studied extensively. In particular, the ergodic properties and the hypoelliptic and hypocoercive structure of the dynamics and of the corresponding Fokker-Planck equation have been analyzed in~\cite{OttobrePavliotis11} where the homogenization and overdamped limits are also studied. Another
recent paper that has motivated this work is \cite{Duong2017} where the overdamped limit of the underdamped McKean-Vlasov equation has been derived using variational and large deviations approach. In this paper, we generalize these aforementioned results to Equation~\eqref{e:mckean} for a quite broad class of confining and interaction potentials.  We obtain the fundamental solution (Green's function) for the generalized McKean-Vlasov equation~\eqref{eq: GLMV} for the case of a quadratic confining potential and a quadratic (Curie-Weiss) interaction. For nonconvex confining potentials, we characterize its stationary state(s) showing that the bifurcation diagram of the stationary problem is independent of the memory in the system. In addition, we study convergence to equilibrium and the Markovian asymptotic limit for both the finite system \eqref{e:particles} and Equation~\eqref{eq: GLMV} using the formal perturbation expansions method developed in \cite{PavliotisStuart2008,Pavl2014}. Furthermore, we demonstrate that Equation~\eqref{eq: GLMV} shares a similar GENERIC (\underline{G}eneral \underline{E}quation for \underline{N}on\underline{E}quilibrium \underline{R}eversible-\underline{I}rreversible \underline{C}oupling~\cite{Oettinger05}) structure, which is a well-established formalism for non-equilibrium thermodynamical systems unifying both reversible and irreversible dynamics, with the overdamped and underdamped McKean-Vlasov equations.


%
%

The rest of the paper is organized as follows. In Section~\ref{sec:quadratic}, we calculate the spectrum of the Fokker-Planck operator associated to the finite system~\eqref{e:particles} and construct the fundamental solution to the mean field equation~\eqref{eq: GLMV} in the case of quadratic confining and interaction potentials. In Section~\ref{sec: nonconvex} we study nonconvex confining potentials. We provide a characterization of stationary states, study phase transition and show convergence to a stationary state. In Section~\ref{sec:GENERIC} we recast the generalized McKean-Vlasov equation into the GENERIC framework, obtaining an alternative derivation of its stationary solutions.  In Section~\ref{sec: asymptotic limit} we study the white noise limit for the generalized McKean-Vlasov dynamics, both for particle system and for the mean field limit. Finally, a brief summary of the results obtained in this paper and discussion on future work are presented in Section~\ref{sec: discussion}.










\section{Quadratic confining and interaction potentials: calculation of the fundamental solution, spectral analysis and convergence to equilibrium}
\label{sec:quadratic}

In this section, we compute and compare the spectrum of the three operators associated to the overdamped McKean-Vlasov dynamics, the underdamped McKean-Vlasov dynamics and the generalized McKean-Vlasov (gMV) dynamics. In particular, in Section~\ref{sec: particle systems} we study the finite dimensional problem, and in Section~\ref{subsec:mean-field} we calculate the spectrum and the fundamental solution of the mean field PDEs, for quadratic interaction and confining potentials.
%
%
\subsection{Finitely many weakly interacting particles}
\label{sec: particle systems}
We consider the following $N$-particle weakly interacting particle systems where the dynamics of the  $i$-particle ($i=1,\ldots, N$) depends on the average interaction with all other particles.
\begin{itemize}
\item The overdamped McKean-Vlasov (oMV)  dynamics
\begin{equation}
\label{e: MV particles}
dX_i=-\nabla V(X_i)\,dt-\frac{1}{N}\sum_{j=1}^N \nabla U(X_i-X_j)\,dt+\sqrt{2\beta^{-1}}dW_i.
\end{equation}
\item The underdamped McKean-Vlasov  (uMV) dynamics 
\begin{subequations}
\label{e: VFP particles}
\begin{eqnarray}
dQ_i&=&P_i\,dt,
\\ dP_i&=&-\nabla V(Q_i)\, dt-\frac{1}{N}\sum_{j=1}^N \nabla U(Q_i-Q_j)\,dt-\gamma P_i\,dt+\sqrt{2\gamma\eta^{-1}} d W_i.
\end{eqnarray}
\end{subequations}
\item The generalized McKean-Vlasov (gMV) dynamics
\begin{subequations}
\label{eq: GLMV particles}
\begin{eqnarray}
dQ_i&=&P_i\,dt,
\\dP_i&=&-\nabla V(Q_i)\,dt-\frac{1}{N}\sum_{j=1}^N \nabla U(Q_i-Q_j)\,dt+\lambda^T Z_i \,dt,
\\ dZ_i&=&-P_i \lambda \,dt-A Z_i\,dt+\sqrt{2 \beta^{-1} A}dW_i.
\end{eqnarray}
\end{subequations}
\end{itemize}
In this section we consider the case where both the external and interacting potentials are quadratic 
\begin{equation}
\label{eq: quadratic potentials}
V(\xi)=\frac{1}{2}\omega^2 \xi^2 \quad\text{and}\quad U(\xi)=\frac{1}{2}\eta^2 \xi^2.
\end{equation} 
In this case, all the dynamics can be written in the form of a (possibly degenerate) Ornstein-Ulenbeck process~\cite{Metafune2002,LorenziBertoldi2007,OPS2015}
\begin{equation}
d\Y^N =\B^N \Y^N\,dt +\sqrt{2\beta^{-1} \D^N}\,dW^N,
\end{equation}
with the corresponding state spaces, drift and diffusion matrices (written using block matrices):
\begin{itemize}
\item The quadratic oMV dynamics
\begin{align*}
&\Y^N_{oMV}=(X_1,\ldots, X_N)\in \R^{dN},\quad\Q^N_{oMV}=I_{dN\times dN};
\\&\B^N_{oMV}=\begin{pmatrix}
-\Big(\omega^2+\frac{N-1}{N}\eta^2\Big)&\ldots&\frac{1}{N}\eta^2\\
\vdots&\ddots&\vdots\\
\frac{1}{N}\eta^2&\ldots&-\Big(\omega^2+\frac{N-1}{N}\eta^2\Big)
\end{pmatrix}\in \R^{dN\times dN}.
\end{align*}
\item The quadratic uMV dynamics
\begin{align*}
& \Y^N_{uMV}=(Q_1,\ldots,Q_N,P_1,\ldots, P_N)\in\R^{2d N},\quad \B^N_{uMV}=\left(
\begin{array}{c|c}
0 & I \\ \hline
\B^N_{oMV} &-\gamma
\end{array}\right),\quad \D^N_{uMV}=\left(
\begin{array}{c|c}
0 & 0 \\ \hline
0 &\gamma
\end{array}\right).
\end{align*}
\item The quadratic gMV dynamics
\begin{align*}
& \Y^N_{gMV}=(Q_1,\ldots, Q_N, P_1,\ldots,P_N,Z^1_1,\ldots, Z_1^m,\ldots, Z_N^1,\ldots, Z_N^m)\in \R^{(2+m)dN},
\\&
\\&\B^N_{gMV}=\begin{pmatrix}
0&I&0&\ldots&0\\
\B^N_{oMV}&0&\lambda_1&\ldots&\lambda_m\\
0&-\lambda_1&-\alpha_1&\ldots&0\\
\vdots&\vdots&\vdots&\ddots&\vdots\\
0&-\lambda_m&0&\ldots&-\alpha_m
\end{pmatrix},\quad \D^N_{gMV}=\begin{pmatrix}
0&0&0&\ldots&0\\
0&0&0&\ldots&0\\
0&0&\alpha_1&\ldots&0\\
\vdots&\vdots&\vdots&\ddots&\vdots\\
0&0&0&\ldots&\alpha_m
\end{pmatrix}.
\end{align*}
\end{itemize}
\subsubsection{Calculation of the spectrum and of the fundamental solution}
The quadratic cases are of special interest because we can characterize the spectrum of the dynamics explicitly enabling us to compare their spectrum and rates of convergence to equilibrium. Our analysis will be based on the result in~\cite{Metafune2002,OPS2015} on the description of the spectrum of general hypoelliptic quadratic systems. Before introducing the result, we need to recall the relevant concept of hypoellipticity. We consider the following general Ornstein-Ulenbeck operator
\begin{equation}
\label{eq: OU operator}
P=Bx\cdot\nabla +\div(D\nabla ),\quad x\in\R^d,
\end{equation}
where $B$ and $D$ are real $d\times d$-matrices, with $D$ symmetric positive semi-definite. We say that $P$ is a hypoelliptic Ornstein-Uhlenbeck operator if one of the following equivalent conditions (thus all of them) holds
\begin{enumerate}[(i)]
\item The symmetric positive semidefinite matrices 
\begin{equation}
\label{eq: Dt}
D_t=\int_0^t e^{sB}D e^{sB^T}\,ds
\end{equation}
are nonsingular for some (equivalently, for all) $t>0$, i.e., $\det D_t>0$.
\item The Kalman rank condition holds
$$
\mathrm{Rank}[B|D^\frac{1}{2}]=d\quad\text\quad
$$
where $
[B|D^\frac{1}{2}]=[D^\frac{1}{2},BD^\frac{1}{2},\dots, B^{d-1}D^\frac{1}{2}]
$ is the $d\times d^2$ matrix obtained by writing consecutively the columns of the matrices $B^jD^\frac{1}{2}$ ($j=0,\dots, d-1$) with $D^\frac{1}{2}$ the symmetric positive semi-definite matrix given by the square root of $D$.
\item The H\"{o}rmander condition holds, i.e., the Lie algebra generated by $Y_0,X_1,\dots,X_d $ has full rank at every point in $\R^d$,
$$
\forall x\in\R^d,\quad \mathrm{Rank}\, \mathcal{L}(X_1,\dots, X_d,Y_0)(x)=d,
$$
with 
$$
Y_0:=Bx\cdot\nabla, \quad X_i=\sum_{j=1}^d D_{ij}\partial_{x_j}, i=1,\dots, d.
$$
\end{enumerate}
It is known that~\cite{Kol34,LorenziBertoldi2007} when the Ornstein-Ulenbeck operator $P$ in \eqref{eq: OU operator} is hypoelliptic, then it has a smooth fundamental solution $\Gamma(t,x,y)$ given explicitly by
\begin{equation}
\label{eq: fundamental sol}
\Gamma(t,x,y)=\frac{1}{(4\pi)^{d/2}\sqrt{\det D_t}}\exp\Big(-\frac{1}{4}(x-e^{-tB}y)^T D_t^{-1} (x-e^{-tB}y)\Big),
\end{equation}
where $D_t$ is given by Equation \eqref{eq: Dt}. Furthermore, it admits a unique invariant measure that is absolutely continuous with respect to the Lebesgue measure $d\mu(x)=\rho(x)\,dx$ with
$$
\rho(x)=\frac{1}{(4\pi)^{d/1}\sqrt{\det D_\infty}}\exp\Big(-\frac{1}{4}x^T D_\infty^{-1}x\Big),
$$
where
$$
D_\infty=\int_0^\infty e^{sB}D e^{sB^T}\,ds.
$$
We will make use of the following result.
\begin{proposition}[~\cite{Metafune2002,OPS2015}]
\label{prop: spectrum}
Suppose that
$$
P=Bx\cdot\nabla +\div(D\nabla),\quad x\in \R^d,
$$
is a hypoelliptic Ornstein-Ulenbeck operator that has the invariant measure $d\mu(x)=\mu(x)\,dx$. Then the spectrum of the operator $P: L^2_\mu\to L^2_\mu$ equipped with the domain
$$
D(P)=\{u\in L^2_\mu: Pu\in L^2_mu\},
$$
is only composed of eigenvalues with finite algebraic multiplicities given by
\begin{equation}
\label{eq: spectrum}
\sigma(P)=\Big\{\sum_{\lambda\in\sigma(B)}~\lambda k_\lambda:\, k_\lambda\in \mathbb{N}\Big\}.
\end{equation} 
\end{proposition}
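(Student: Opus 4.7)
The plan is to exploit two structural facts: since the invariant measure $\mu$ is Gaussian, polynomials are dense in $L^2_\mu$; and the operator $P=Bx\cdot\nabla+\mathrm{div}(D\nabla)$ preserves the filtration $\mathcal{P}_0\subset\mathcal{P}_1\subset\dots$ of polynomial spaces graded by total degree. Indeed, $Bx\cdot\nabla$ preserves degree while $\mathrm{div}(D\nabla)$ strictly lowers degree by $2$, so each restriction $P|_{\mathcal{P}_n}$ is a linear operator on a finite-dimensional space and can be analyzed by elementary linear algebra. The candidate eigenfunctions of $P$ should therefore live inside this polynomial scale, and the computation reduces to identifying the eigenvalues of $P|_{\mathcal{P}_n}$ for each $n$.

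First I would reduce $B$ to Jordan canonical form via a linear change of coordinates $x=Sy$. Under this change, $P$ transforms to an operator of the same shape with drift matrix $J=S^{-1}BS$ in Jordan form; the spectrum of $B$, the hypoelliptic structure, and the Gaussian nature of the invariant measure are all preserved. On the quotient $\mathcal{P}_n/\mathcal{P}_{n-1}$ of degree-$n$ homogeneous polynomials only $Jy\cdot\nabla$ acts nontrivially, and in the monomial basis $y_1^{k_1}\cdots y_d^{k_d}$ with $\sum_j k_j=n$ its matrix is upper triangular with diagonal entries $\sum_j \lambda_j k_j$, where the $\lambda_j$ are the eigenvalues of $B$ listed with algebraic multiplicity. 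Because $\mathrm{div}(D\nabla)$ is strictly degree-lowering, it contributes only a nilpotent perturbation in the degree filtration and does not alter the eigenvalues of $P|_{\mathcal{P}_n}$. Taking the union over $n\ge 0$ recovers the candidate set $\{\sum_\lambda \lambda k_\lambda:k_\lambda\in\mathbb{N}\}$ in \eqref{eq: spectrum}, and each eigenspace inherits finite algebraic multiplicity from the finite-dimensional polynomial spaces.

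The hard part, and the main obstacle, is showing that these polynomial eigenfunctions \emph{exhaust} $\sigma(P)$ in $L^2_\mu$, ruling out any residual or continuous spectrum; this is delicate because $P$ is non-self-adjoint and degenerate elliptic. The route I would take is through the explicit Mehler-type semigroup $e^{tP}$ built from the fundamental solution \eqref{eq: fundamental sol}: under the hypoelliptic and invariance hypotheses (which force the eigenvalues of $B$ to lie in the open left half-plane, so that $D_t\to D_\infty$), one can verify that $e^{tP}$ has a Hilbert-Schmidt kernel on $L^2_\mu$ for each $t>0$, hence purely discrete spectrum. The spectral mapping theorem then identifies $\sigma(P)$ with the logarithms of the eigenvalues of $e^{tP}$; combined with the density of polynomials in $L^2_\mu$ and the eigenvalue computation above, this pins down the spectrum to the set \eqref{eq: spectrum}. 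The careful verification of the absence of residual/continuous spectrum and the precise identification of the domain follow the strategy of \cite{Metafune2002,OPS2015}, which can be invoked directly.
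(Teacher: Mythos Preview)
The paper does not supply its own proof of this proposition: it is stated as a quoted result from \cite{Metafune2002,OPS2015} and then applied. There is therefore nothing in the paper to compare your argument against.

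That said, your sketch is in line with the approach of those references. The polynomial-filtration idea (that $Bx\cdot\nabla$ preserves degree while $\div(D\nabla)$ lowers it, so eigenvalues of $P|_{\mathcal{P}_n}$ are read off from the Jordan form of $B$ acting on monomials) is exactly how the point spectrum is identified there, and the compactness of the Ornstein--Uhlenbeck semigroup on $L^2_\mu$ is indeed the mechanism used to rule out residual and continuous spectrum. One refinement worth noting: in \cite{Metafune2002} compactness is obtained not by a direct Hilbert--Schmidt kernel estimate but via the characterization of the domain and a compact embedding argument; your Hilbert--Schmidt route is plausible but would need the explicit bound on the kernel in $L^2(\mu\otimes\mu)$ to be checked. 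Otherwise the outline is sound and matches the cited literature.
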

We note that in~\cite{Metafune2002} the spectrum in all $L^p_\mu$ spaces is calculated. By direct verification, see for instance~\cite[Section 4.2]{OPS2012} for such a verification for similar models, one can show that all the quadratic systems considered in the previous section  are hypoelliptic. To characterize their spectrum, according to Proposition \ref{prop: spectrum}, we need to calculate eigenvalues of their corresponding drift matrices $B$.
\begin{itemize}
\item \textit{Spectrum of the quadratic oMV dynamics}.
The eigenvalues of $\B^N_{oMV}$ can be found by direct computation:
$$
\sigma(\B^N_{oMV})=\{-\omega^2,-(\omega^2+\eta^2)\}.
$$
Thus
$$
\sigma (L^N_{oMV})=\{-\omega^2 k_1-(\omega^2+\eta^2) k_2: k_1, k_2\in \mathbb{N}\}.
$$
\item \textit{Spectrum of the quadratic uMV dynamics}. Let $\alpha$ be an eigenvalue of $\B^N_{uMV}$, that is, there exist $\xi_1,\xi_2\in\R^{dN}$ such that
\begin{equation*}
\begin{pmatrix}
0&I\\
\B^N_{oMV}&-\gamma
\end{pmatrix}\begin{pmatrix}
\xi_1\\\xi_2
\end{pmatrix}=\alpha\begin{pmatrix}
\xi_1\\\ xi_2.
\end{pmatrix}
\end{equation*}
This implies that $\xi_2=\alpha \xi_1$ and $\B^N_{oMV}\xi_1-\gamma \xi_2=\lambda \xi_2$. Substituting the former into latter, we get 
\[
\B^N_{oMV}\xi_1=\alpha(\alpha+\gamma) \xi_1.
\]
Thus $\alpha(\alpha+\gamma)$ is an eigenvalue of $\B^N_{oMV}$, from which we deduce that
\[
\alpha(\alpha+\gamma)=-\omega^2 \quad\text{or}\quad \alpha(\alpha+\gamma)=-(\omega^2+\eta^2).
\]
Solving these equations we find
\begin{equation}
\alpha_{1,2}=\frac{-\gamma\pm \sqrt{\gamma^2-4\omega^2}}{2}\quad\text{and}\quad\alpha_{3,4}=\frac{-\gamma\pm \sqrt{\gamma^2-4(\omega^2+\eta^2)}}{2}.
\end{equation}
The spectrum of the quadratic uMV dynamics is then given by
\[
\sigma(L^N_{uMV})=\Big\{\sum_{i=1}^4 k_i\alpha_i: k_i\in \mathbb{N}, i=1,\ldots, 4\Big\}.
\]
\item \textit{Spectrum of the quadratic gMV dynamics}. Similarly, let $\nu$ be an eigenvalue of $\B^N_{gMV}$, that is, for some $\xi_1,\xi_2, \zeta_1, \ldots,\zeta_{m}\in\R^{dN}$, it holds
\[
\begin{pmatrix}
0&I&0&\ldots&0\\
\B^N_{oMV}&0&\lambda_1&\ldots&\lambda_m\\
0&-\lambda_1&-\alpha_1&\ldots&0\\
\vdots&\vdots&\vdots&\ddots&\vdots\\
0&-\lambda_m&0&\ldots&-\alpha_m
\end{pmatrix}\begin{pmatrix}
\xi_1\\ \xi_2\\ \zeta_1\\  \vdots \\ \zeta_{m}
\end{pmatrix}=\nu \begin{pmatrix}
\xi_1\\ \xi_2\\ \zeta_1\\  \vdots \\ \zeta_{m}
\end{pmatrix}.
\]
This is equivalent to the system
\begin{align*}
\begin{cases}
\xi_2&=\nu \xi_1,\\
B^N_{MV}\xi_1+\sum_{j=1}^m\lambda_j \zeta_j&=\nu\xi_2,\\
-\lambda_j \xi_2-\alpha_j\zeta_j&=\nu\zeta_j \quad\text{for all}\quad j=1,\ldots, m.
\end{cases}
\end{align*}
By substituting the first and the last $m$ equations into the second one, we get
\[
\B^N_{MV}\xi_1=\nu \Big(\nu+\sum_{j=1}^m\frac{\lambda_j^2}{\nu+\alpha_j}\Big)\xi_1.
\]
It follows that $\nu \Big(\nu+\sum_{j=1}^m\frac{\lambda_j^2}{\nu+\alpha_j}\Big)$ is an eigenvalue of $\B^N_{MV}$. Thus, we have
\begin{equation*}
\nu \Big(\nu+\sum_{j=1}^m\frac{\lambda_j^2}{\nu+\alpha_j}\Big)=-\omega^2\quad\text{or}\quad \nu \Big(\nu+\sum_{j=1}^m\frac{\lambda_j^2}{\nu+\alpha_j}\Big)=-(\omega^2+\eta^2).
\end{equation*}
In general these equations, which are equivalent to two polynomial equations of degree $m+2$, can not be solved explicitly. Suppose that $\nu_1,\ldots,\nu_{m+2}$ and $\nu_{m+3},\ldots, \nu_{2m+4}$ are solutions to the first and second equations, respectively. Then the spectrum of the quadratic gMV dynamics is given by
\[
\sigma(L^N_{gMV})=\Big\{\sum_{j=1}^{2m+4}\nu_j k_j: k_j\in \mathbb{N}, j=1,\ldots,2m+4\Big\}.
\]
\end{itemize}
\begin{remark}

From the calculation of the spectrum of the Fokker-Planck operator and by applying \cite[Theorem 2.7]{OPS2015} it follows that we have exponentially fast convergence to equilibrium in the $L^2$-space weighted by the invariant measure. This is also true in $L^1$ at least for the overdamped McKean-Vlasov equation, see~\cite{Tamura1987}.

\end{remark}

\subsection{The mean field PDE}
\label{subsec:mean-field}
The mean-field limits of the quadratic systems considered in Section \ref{sec: particle systems} can be written in a common form 
\begin{equation}
\label{eq: quadratic mean-field}
dX(t)=BX(t)\,dt+K(X(t)-\langle X(t)\rangle)\,dt+\sqrt{2 D^\frac{1}{2}}dW(t),
\end{equation}
for suitable matrices $B,K$ and $D$ with $D$ symmetric positive semidefinite. Here $\langle X(t)\rangle$ denotes the average of $X(t)$ with respect to its own law.

In this section, we will obtain the fundamental solution of the Fokker-Planck equation associated to the SDE \eqref{eq: quadratic mean-field} which is given by
\begin{equation}
\begin{cases}
\label{eq: general eqn}
\partial_t\rho(x,t)=-\div\Big(Bx\rho(x,t)\Big)-\div\Big(\int_{\R^n}K (x-x')\rho(x',t)\,dx'\, \rho(x,t)\Big)+\div \Big(D\nabla\rho(x,t)\Big),\\
\rho(x,0)=\delta(x-x_0).
\end{cases}
\end{equation}
\begin{proposition}
\label{prop: fund}
The solution of the initial value problem \eqref{eq: general eqn}, i.e. the Green's function of the Fokker-Planck operator, is given by,
\begin{equation}
\rho(x,t)=\frac{1}{(2\pi)^{n/2}\sqrt{\det \Qc(t)}}\exp\Big[-\frac{1}{2}\Qc^{-1}(t)(x-m(t)x_0)\cdot (x-m(t)x_0)\Big],
\end{equation}	
where the mean vector and the covariance matrix are given, respectively, by
\begin{equation}
m(t)=e^{tB}\quad\text{and}\quad \Qc(t)=2\int_0^t e^{2s(B+K)} D\,ds.
\end{equation}
\end{proposition}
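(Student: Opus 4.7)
The plan is to exploit the linearity of the interaction kernel. Since $K$ acts as a linear map on displacements, the convolution term simplifies to
\begin{equation*}
\int_{\R^n} K(x-x')\rho(x',t)\,dx' \;=\; K\bigl(x - m(t)\bigr), \qquad m(t) := \int_{\R^n} x\,\rho(x,t)\,dx,
\end{equation*}
so that the nonlinear Fokker-Planck equation becomes a linear, time-inhomogeneous one as soon as the first moment $m(t)$ is known. In particular, since the drift is affine and the diffusion is constant, a $\delta$-initial condition propagates to a Gaussian for all times, and it suffices to track the pair $(m(t),\Qc(t))$.

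First I would derive the mean. Multiplying the PDE by $x$ and integrating by parts, the convolution term contributes $\int_{\R^n} K(x-m(t))\rho(x,t)\,dx = K\bigl(m(t)-m(t)\bigr) = 0$, so that $\dot m = Bm$ with $m(0)=x_0$, giving $m(t) = e^{tB}x_0$ as claimed.

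Next I would introduce the shifted density $\tilde\rho(y,t) := \rho(y+m(t),t)$. Using $\dot m = Bm$, a direct computation reduces the problem to the linear hypoelliptic Ornstein-Uhlenbeck equation
\begin{equation*}
\partial_t \tilde\rho \;=\; -\div\bigl((B+K)y\,\tilde\rho\bigr) + \div(D\nabla\tilde\rho), \qquad \tilde\rho(y,0)=\delta(y),
\end{equation*}
whose fundamental solution is the explicit Kolmogorov Gaussian recalled earlier in the paper. Reading off its covariance (which solves the Lyapunov equation $\dot\Qc = (B+K)\Qc + \Qc(B+K)^T + 2D$, $\Qc(0)=0$) and undoing the shift then produces the Green's function of the stated form.

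The only delicate point --- and the main obstacle --- is reconciling the generic Lyapunov representation $\Qc(t)=2\int_0^t e^{s(B+K)}D\,e^{s(B+K)^T}\,ds$ with the more compact formula $\Qc(t)=2\int_0^t e^{2s(B+K)}D\,ds$ appearing in the statement; this requires a symmetry/commutativity relation between $B+K$ and $D$ which one would need to extract from the block structure of the oMV, uMV and gMV systems (where $D$ projects onto a subblock on which $B+K$ is effectively self-adjoint). Apart from this bookkeeping, the argument is simply a direct verification of the Gaussian ansatz in the linearized equation.
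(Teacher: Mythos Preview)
Your approach is correct and in fact cleaner than the paper's. The paper proceeds by substituting the Gaussian ansatz directly into the PDE, passing to $g=-\log\rho$, and matching coefficients of the resulting quadratic polynomial in $x$; you instead observe that the convolution collapses to $K(x-m(t))$, compute the mean from the first-moment equation, shift coordinates by $m(t)$, and then invoke the known Kolmogorov fundamental solution~\eqref{eq: fundamental sol} for the resulting linear Ornstein--Uhlenbeck equation with drift matrix $B+K$. Your route is shorter and makes the structure transparent: the nonlinear self-consistency lives entirely in the mean, while the covariance is governed by a standard linear problem.

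Your caveat about the covariance formula is well placed, and you should not try to argue it away via block structure. The Lyapunov equation $\dot\Qc=(B+K)\Qc+\Qc(B+K)^T+2D$ that you obtain gives
\[
\Qc(t)=2\int_0^t e^{s(B+K)}D\,e^{s(B+K)^T}\,ds,
\]
and this is the correct general expression. The paper's formula $\Qc(t)=2\int_0^t e^{2s(B+K)}D\,ds$ comes from a step where the quadratic-form identity $\tfrac{1}{2}\partial_t(\Qc^{-1})x\cdot x = \bigl[-\Qc^{-1}(B+K)-\Qc^{-1}D\Qc^{-1}\bigr]x\cdot x$ is promoted to a matrix identity without symmetrizing; the resulting equation $\partial_t\Qc=2D+2(B+K)\Qc$ does not preserve symmetry of $\Qc$, and its Duhamel solution is not in general a symmetric matrix. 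For the uMV and gMV drifts one checks directly that $(B+K)D\neq D(B+K)^T$, so no commutativity relation rescues the compact form. Keep your Lyapunov expression; the two formulas agree only in the overdamped case where $B+K$ is symmetric and commutes with $D$.
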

\begin{proof}
Equation~\eqref{eq: general eqn} is a nonlinear nonlocal degenerate partial differential equation. Since the law of the finite particle system corresponding to \eqref{eq: quadratic mean-field} when starting at a deterministic initial condition is Gaussian, its meanfield limit is also Gaussian. Therefore, we look for the fundamental solution to~\eqref{eq: general eqn} of the form of a multivariate Gaussian distribution with time-dependent mean and covariance matrix
\begin{equation}
\label{eq: Gaussian}
\rho(x,t)=\frac{1}{(2\pi)^{n/2}\sqrt{\det \Qc(t)}}\exp\Big[-\frac{1}{2}\Qc^{-1}(t)(x-m(t)x_0)\cdot (x-m(t)x_0)\Big].
\end{equation}  
We will find the mean $m(t)$ and the variance $\Qc(t)$ by requiring that $\rho(x,t)$ satisfies Equation~\eqref{eq: general eqn}. Let us define
\begin{equation}
\label{eq: g}
g(x,t):=-\log\rho(x,t)=\frac{1}{2}\Qc^{-1}(t)(x-m(t)x_0)\cdot (x-m(t)x_0)+s(t),
\end{equation}
where $s(t)=\frac{1}{2}\log\det\Qc(t)+\frac{n}{2}\log (2\pi)$. The logarithmic transformation has been used previously in the literature, particularly in optimal control \cite{Fleming1982}.  Substituting $\rho=\exp(-g)$ in \eqref{eq: general eqn}, we can rewrite the equation in terms of $g$ as follows
\begin{equation}
\label{eq: eqn g}
\partial_t g=-(B+K)x\cdot\nabla g+D:\nabla^2 g-D\nabla g\cdot\nabla g+h(t)\cdot\nabla g+\Tr(B+K),
\end{equation}
where
\begin{align*}
h(t)&=\int Kx'\rho(x',t)\,dx'
\\&=\frac{1}{(2\pi)^{n/2}\sqrt{\det \Qc(t)}}\int Kx'\exp\Big[-\frac{1}{2}\Qc^{-1}(t)(x'-m(t)x_0)\cdot (x'-m(t)x_0)\Big]\,dx'
\\&=Km(t)x_0.
\end{align*}
To find $\Qc$ and $m$ we set equal the corresponding terms in two sides of \eqref{eq: eqn g}. Using the explicit formula  \eqref{eq: g} of $g$, we compute the left-hand side of \eqref{eq: eqn g}, which is the time-derivative of $g$
\begin{multline}
\partial_t g=\frac{1}{2}\partial_t (\Qc^{-1}(t))x\cdot x-\partial_t (\Qc^{-1}(t)m(t))x_0\cdot x+\frac{1}{2}\partial_t\Big(m(t)^TQ^{-1}(t)m(t)\Big)x_0\cdot x_0+\partial_t s(t).
\label{eq: LHS}
\end{multline}
Since $\nabla g=\Qc^{-1}(t)(x-m(t)x_0),\quad \nabla^2 g=\Qc^{-1}(t)$, the right-hand side of \eqref{eq: eqn g} is given by
\begin{align}
&-(B+K)x\cdot\nabla g+D:\nabla^2 g-D\nabla g\cdot\nabla g+h(t)\cdot\nabla g+\Tr(B+K)\notag
\\\qquad&=-\Qc^{-1}(t)(B+K)x\cdot (x-m(t)x_0)-\Qc^{-1}(t)D\Qc^{-1}(t)(x-m(t)x_0)\cdot (x-m(t)x_0)\notag
\\&\qquad\qquad+\Qc^{-1}(t)Km(t)x_0\cdot (x-m(t)x_0)+D:\Qc^{-1}(t)-\Tr(B+K)\notag
\\ \qquad&=\Big[-\Qc^{-1}(t)(B+K)-\Qc^{-1}(t)D\Qc^{-1}(t)\Big]x\cdot x-\Big[-m(t)^T\Qc^{-1}(t)(B+K)\notag
\\& \qquad-2m(t)^T\Qc^{-1}(t)D\Qc^{-1}m(t)-\Qc^{-1}Km(t)\Big]x_0\cdot x\notag
\\&\qquad-\Big[m(t)^T\Qc^{-1}(t)D\Qc^{-1}(t) m(t)+m(t)^T\Qc^{-1}(t)Km(t)\Big]x_0\cdot x_0+D:\Qc^{-1}(t)+\Tr(B+K).
\label{eq: RHS}
\end{align}
By comparing \eqref{eq: LHS} and \eqref{eq: RHS} we obtain the following equations
\begin{align}
&\partial_t \Qc^{-1}(t)=2\Big[-\Qc^{-1}(t)(B+K)-\Qc^{-1}(t)D\Qc^{-1}(t)\Big],\label{eq: Q}
\\&\partial_t(\Qc^{-1}(t)m(t))=-m(t)^T\Qc^{-1}(t)(B+K)-2m(t)^T\Qc^{-1}(t)D\Qc^{-1}m(t)-\Qc^{-1}Km(t),\label{eq: m}
\\& \partial_t(m(t)^TQ^{-1}(t)m(t))=-2\Big[m(t)^T\Qc^{-1}(t)D\Qc^{-1}(t) m(t)+m(t)^T\Qc^{-1}(t)Km(t)\Big],\label{eq: Qm}
\\&\partial_t s(t)=D:\Qc^{-1}(t)+\Tr(B+K).\label{eq: s}
\end{align}
We will solve these equations together with the initial value datum $\Qc(0)=\mathbf{0}, m(0)=\mathbf{I}$ and $s(0)=\frac{n}{2}\log(2\pi)$.
Equation \eqref{eq: Q} is a nonlinear Riccati-type equation for $\Qc^{-1}(t)$. We use the following formula for the derivative of the inverse of a matrix
\[
\frac{d A^{-1}(t)}{dt}=-A^{-1}(t)\frac{d A(t)}{dt}A^{-1}(t).
\] 
Applying this formula for $\Qc(t)$ and using \eqref{eq: Q} we get
\begin{align}
\partial_t \Qc(t)&=-\Qc(t)\partial_t (\Qc^{-1}(t))\Qc(t)=-2\Qc(t)\Big[-\Qc^{-1}(t)(B+K)-\Qc^{-1}(t)D\Qc^{-1}(t)\Big]\Qc(t)\notag
\\&=2 \Big[D+(B+K)\Qc(t)\Big].
\label{eq: derivative Q}
\end{align}
Using Duhamel's formula we obtain
\begin{equation}
\Qc(t)=2\int_0^t e^{2(t-s)(B+K)}D  \,ds= 2\int_0^t e^{2s(B+K)}D  \,ds.
\end{equation}
From \eqref{eq: Q} and \eqref{eq: m} we deduce that 
\begin{equation}
\label{eq: m2}
\partial_t m(t)=Bm(t),
\end{equation}
which yields $m(t)=e^{tB}$.
From \eqref{eq: Q} and \eqref{eq: m2}, we have
\begin{align*}
\partial_t(m(t)^T\Qc^{-1}(t)m(t))&=\partial_t m(t)^T\Qc^{-1}(t)m(t)+m(t)^T\partial_t(\Qc^{-1}(t))m(t)+m(t)^T\Qc^{-1}(t)\partial_t m(t)
\\&=-2\Big[m(t)^T\Qc^{-1}(t)D\Qc^{-1}(t) m(t)+m(t)^T\Qc^{-1}(t)Km(t)\Big],
\end{align*}
which is exactly \eqref{eq: Qm}. In other words, \eqref{eq: Qm} is a consequence of \eqref{eq: Q} and \eqref{eq: m}. Similarly we show now that \eqref{eq: s} is also redundant as a consequence of \eqref{eq: derivative Q}. In fact, applying the formula for the derivative of the determinant of a matrix we get
\begin{align*}
\partial_t s(t)&=\frac{1}{2}\partial_t \log\det\Qc(t)=\frac{1}{2}\frac{\partial_t\det\Qc(t)}{\Qc(t)}
\\&=\frac{1}{2}\Tr\Big(\Qc^{-1}(t)\partial_t \Qc(t)\Big)
\\&\overset{\eqref{eq: derivative Q}}{=}\Tr\Big[\Qc^{-1}(t)\big(D+(B+K)\Qc(t)\big)\Big]
\\&=D:\Qc^{-1}(t)+\Tr(B+K),
\end{align*}
which is indeed \eqref{eq: s}, as claimed. In conclusion, we find that
\begin{equation*}
\Qc(t)=2\int_0^t e^{2s(B+K)}D\,ds\quad \text{and}\quad m(t)=e^{tB}.
\end{equation*}
\end{proof}
\begin{remark}\ \\
1) By taking $d=1$, $B=-\gamma$, $K=\kappa$ and $D=Q$  we recover the known result~\cite[Equation (3.166)]{Frank} for the Shimizu-Yamada model (the overdamped McKean-Vlasov equation with quadratic interaction and confining potentials).

\noindent 2) Suppose that $\L=\S+\A$ is an infinitesimal generator of a diffusion process that possesses an invariant measure $\rho_\infty$ and satisfies 
\[
\S=\S^\ast,\quad \A=-\A^\ast~~\text{in}~~L^2_{\rho_\infty}~~\text{and}~~\S\rho_\infty=0=\A\rho_\infty.
\]
Then the following relation holds~\cite[Chapter 4]{Pavl2014}:
\begin{equation}
\label{eq: dual sols}
\L^\ast(f\rho_\infty)=(\S f-\A f)\rho_\infty,
\end{equation} 
From this relation one can find the fundamental solution to the forward Fokker-Plank equation, $\L^\ast\rho=0$, from that of the backward Fokker-Planck equation, $\L f=0$, and vice versa. 
\end{remark}

%
%
\section{Nonconvex potentials: stationary solutions, phase transition and convergence to equilibrium}
\label{sec: nonconvex}
For non-convex confining potentials and for the Curie-Weiss quadratic interaction potential, it is well known that the the McKean-Vlasov equation exhibits a continuous phase transition~\cite{Dawson1983}: at sufficiently high temperatures only one stationary distribution exists, corresponding to zero mean (magnetization order parameter), whereas at low temperatures this mean zero state becomes unstable and two new branches emerge (for the Landau quadratic potential), corresponding to a nonzero magnetization. A natural question is whether the addition of inertia, i.e. the underdamped McKean-Vlasov equation, or the addition of memory in the system can change the structure of this pitchfork bifurcation. This problem was studied for the underdamped McKean-Vlasov equation in~\cite{DuongTugaut2016}, where it was shown that the presence of inertia does not change the bifurcation diagram. In this section we show that this is the case also for the generalized McKean-Vlasov dynamics. 

In this section, we make the following assumptions on the confining potential $V$, the interaction potential $U$ and the initial data $\rho_0$. They are made in order to rely on the results in~\cite{Tugaut13a,Tugaut13b, DuongTugaut2018}.
\begin{assumption}
\label{asspt}
\begin{enumerate}[(i)]
\item $V\in C^\infty(\R^d)$; $V(q)\geq C_4|q|^4-C_2 |q|^2$ for some positive constants $C_2$ and $C_4$; $|\nabla V(q)|\leq K|q|^{2m}$ for some positive constants $m$ and $K$; $\nabla^2V(q)>0$ when $q\notin \mathcal{K}$ for some compact subset $\mathcal{K}\subset\R^d$. Finally, $\displaystyle\lim_{|q|\to+\infty}\nabla^2V(q)=+\infty$.
\item $U$ is nonnegative and there exists an even, positive and convex polynomial function $G$ with $\deg(G)=:2n\geq 2$ such that $U(q)=G(|q|)$. 
\item The initial measure $\rho_0$ admits a $\mathcal{C}^\infty$-continuous density with respect to the Lebesgue measure and has a finite entropy. Furthermore, its $8r^2$-th moment with respect to the variable $q$, where $r:=\max\left\{m,n\right\}$, and its second moment with respect to the variable $p$ are finite.
\end{enumerate}
\end{assumption}
\subsection{Stationary solutions: characterization and phase transition}
We first provide a characterization of stationary solutions to Equation~\eqref{eq: GLMV}, i.e., solution to the following equation
\begin{equation}
\label{eq:stationary eqn}
\K[\rho]\rho=0,
\end{equation}
where
\begin{eqnarray}
\label{eq:stationary oper}
\K[\mu]\rho  & = & -\div_q(p\rho)+\div_p\Big[(\nabla_q V(q)+\nabla_q U\ast\mu (q)-\lambda^T z)\rho\Big]
\nonumber\\ &&+
\div_z\Big[( p \lambda +Az)\rho\Big]+\beta^{-1}\div_z(A\nabla_z\rho),
\end{eqnarray}
for a given $\mu\in L^1(\X)$. Note that for a given $\mu$, the operator $\K[\mu](\rho)$ is linear in $\rho$ which can be seen as the linearization of the (quadratic) operator $\K[\rho](\rho)$ around the state $\mu$.

We start by showing that the stationary McKean-Vlasov equation~\eqref{eq:stationary eqn} can be rewritten as a nonlinear integral equation. This reformulation of the stationary Fokker-Planck equation is well known for the overdamped McKean-Vlasov equation, see, e.g.~\cite{Tamura1984}[Lem. 4.1, Thm. 4.1].

\begin{proposition} 
\label{prop: characterization}
If there exists a solution $\rho_\infty\in L^1(\X)\cap L^\infty(\X)$ to Equation~\eqref{eq:stationary eqn} then
\begin{equation}
\label{eq: invariant}
\rho_\infty(q,p,z)=\frac{1}{Z}\exp\left[-\beta\,\Big(\frac{|p|^2}{2}+\frac{\|z\|^2}{2}+V(q)+(U\ast\rho_\infty)(q)\Big) \right],
\end{equation}
where $Z$ is the normalization constant
\begin{equation}
\label{eq: Z}
Z=\int_{\X}\exp\left[-\beta\,\Big(\frac{|p|^2}{2}+\frac{\|z\|^2}{2}+V(q)+(U\ast\rho_\infty )(q)   \Big) \right]\,d\x.
\end{equation}
Conversely, any probability measure whose density satisfies~\eqref{eq: invariant} is a stationary solution to~\eqref{eq: GLMV}.
\end{proposition}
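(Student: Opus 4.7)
The plan is to use the key observation that, when $\mu$ is held fixed, $\K[\mu]$ is precisely the (linear, hypoelliptic) forward Kolmogorov operator of the non-interacting quasi-Markovian generalized Langevin dynamics in the extended phase space with effective confining potential
\begin{equation*}
\tilde V_\mu(q) := V(q) + (U\ast\mu)(q).
\end{equation*}
The equation $\K[\rho_\infty]\rho_\infty = 0$ is then a self-consistency (fixed point) condition: $\rho_\infty$ must equal the invariant measure of the linear GLE with potential $\tilde V_{\rho_\infty}$. I would separate the two implications as follows.

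For the converse (sufficiency), I would simply substitute the ansatz $\rho_\infty = Z^{-1}\exp(-\beta H)$ with $H = \tfrac{1}{2}|p|^2 + \tfrac{1}{2}\|z\|^2 + \tilde V_{\rho_\infty}(q)$ into $\K[\rho_\infty]\rho_\infty$ and verify it vanishes by a direct calculation. Because $\rho_\infty$ is now treated as fixed in the coefficient $U\ast\rho_\infty$, this is a routine computation: using $\nabla_q\rho_\infty = -\beta\nabla_q\tilde V_{\rho_\infty}\rho_\infty$, $\nabla_p\rho_\infty = -\beta p\,\rho_\infty$, $\nabla_z\rho_\infty = -\beta z\,\rho_\infty$, one checks that the Hamiltonian (transport) terms cancel against each other, while the dissipation term $\beta^{-1}\div_z(A\nabla_z\rho_\infty)$ exactly cancels $\div_z[(\lambda p + Az)\rho_\infty]$; thermodynamic consistency (the fluctuation--dissipation relation built into the choice of diffusion coefficient $\sqrt{2\beta^{-1}A}$) guarantees the balance. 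This is the same computation used to verify that the Gibbs measure is invariant for the non-interacting GLE, as in~\cite{OttobrePavliotis11}.

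For the direct implication (necessity), suppose $\rho_\infty\in L^1(\X)\cap L^\infty(\X)$ solves $\K[\rho_\infty]\rho_\infty = 0$. Freezing $\mu = \rho_\infty$ in the coefficients, $\K[\rho_\infty]$ becomes a linear, second-order, degenerate elliptic operator. I would check (using Assumption~\ref{asspt} on $V$, the smoothness and polynomial growth of $U$, and $\rho_\infty\in L^\infty\cap L^1$) that the induced effective potential $\tilde V_{\rho_\infty}$ is smooth and confining in the sense required by the ergodicity results for the generalized Langevin equation in~\cite{OttobrePavliotis11} (see also~\cite{Rey-Bellet2006,Pavl2014}). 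Under these conditions the Hörmander/Kalman hypoellipticity condition on $\K[\rho_\infty]$ is satisfied (as already observed in Section~\ref{sec: particle systems} for the quadratic case, and inherited by the nonlinear drift because the noise structure is unchanged), and the associated Markov semigroup admits a unique invariant probability measure. By the converse direction just proved, the explicit Gibbs density $Z^{-1}\exp[-\beta(\tfrac12|p|^2+\tfrac12\|z\|^2+\tilde V_{\rho_\infty}(q))]$ is such an invariant measure. Uniqueness then forces $\rho_\infty$ to coincide with it, which is precisely~\eqref{eq: invariant}.

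The main obstacle is ensuring the applicability of the uniqueness result for the frozen linear operator $\K[\rho_\infty]$: one must know that $\tilde V_{\rho_\infty}$ inherits enough regularity and coercivity from $V$ and from the a priori bounds on $\rho_\infty$ to fall within the hypotheses of~\cite{OttobrePavliotis11}. This is where Assumption~\ref{asspt}(i)--(ii) enters: the superquadratic growth and convexity at infinity of $V$ dominate the polynomial perturbation $U\ast\rho_\infty$ (which is smooth and of at most polynomial growth since $U$ is polynomial and $\rho_\infty$ is integrable), so $\tilde V_{\rho_\infty}$ still satisfies a Lyapunov condition. Once this verification is in place, the nonlinear integral characterization~\eqref{eq: invariant} follows by an argument analogous to the one employed for the overdamped McKean--Vlasov equation in~\cite{Tamura1984}.
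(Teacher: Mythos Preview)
Your argument is correct, and both you and the paper reduce the nonlinear problem to the \emph{linear} question of uniqueness of the invariant density for $\K[\mu]$ with $\mu=\rho_\infty$ frozen; the converse direction is handled identically by direct substitution. The difference lies in how that linear uniqueness is obtained. You invoke the hypoellipticity and ergodicity machinery of~\cite{OttobrePavliotis11,Rey-Bellet2006} as a black box, after checking that $\tilde V_{\rho_\infty}=V+U\ast\rho_\infty$ still satisfies the growth/Lyapunov conditions required there. The paper instead proves uniqueness by hand, in the style of~\cite{Dressler1987} and~\cite{DuongTugaut2016}: writing a putative second solution as $\bar f=g f^{1/2}$ with $f$ the Gibbs density, one computes $\langle Qg,g\rangle_{L^2}=\beta^{-1}\int f\,A\nabla_z(f^{-1/2}g)\cdot\nabla_z(f^{-1/2}g)\,d\x=0$, which (by positive definiteness of $A$) forces $f^{-1/2}g$ to be independent of $z$; a further substitution into $\K[\mu](fh)=0$ then eliminates the $p$- and $q$-dependence of the remaining factor $h(q,p)$. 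Your route is shorter if one is willing to import external results and to verify that the perturbed potential falls within their hypotheses---the step you rightly flag as the main obstacle. The paper's approach is self-contained and makes explicit exactly which integrability is used (encoded in their admissible class $\A$, needed to justify the divergence theorem on $\X$), whereas your route tacitly assumes $\rho_\infty\geq 0$ is a probability density so that the uniqueness-of-invariant-measure statement applies.
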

\begin{proof}
The converse statement is obviously verified by direct computations. We adapt the proof of a similar statement for the underdamped McKean-Vlasov equation from~\cite[Proposition 1]{DuongTugaut2016} and \cite[Proposition 1\& 2]{Dressler1987} to prove the necessary condition for the generalized McKean-Vlasov equation . It consists of two steps. Due to the presence of the additional variable $z$, Step 1 below will be more involved than those in aforementioned papers.

\noindent \textbf{Step 1: The linearized equation}. We first consider the linearized equation
\begin{equation}
\label{eq: linearized eqn}
\K\rho:=\K[\mu]\rho=0,
\end{equation}
where $\mu\in L^1(\R^d\times\R^d\times\R^{dm})$ is given. We define
\begin{subequations}
\begin{eqnarray}
\label{eq: H and J}
\H_\mu(q,p,z)&:=&\frac{1}{2}|p|^2+V(q)+\frac{1}{2}\|z\|^2+(U\ast\mu)(q)  \quad \text{and}
\\ 
\J & =&  \begin{pmatrix}
0&-I&0\\
I&0&-\lambda^T\\
0&\lambda&0
\end{pmatrix}\in\R^{(2d+m)\times (2d+m)}.
\end{eqnarray}
\end{subequations}
Then Equation~\eqref{eq: linearized eqn} can be written in a compact form as
\[
\K[\mu]\rho=\div[\J\nabla\H_\mu \rho]+\div_z[A(z\rho+\beta^{-1}\nabla_z\rho)]=0,
\]
where the $\div$ and $\nabla$ in the first term are with respect to the full variable $\x=(q,p,z)$.

We prove the following claim: define
\[
f(q,p,z):=\frac{1}{\bar{Z}}\exp\left[-\beta\,\Big(\frac{|p|^2}{2}+\frac{\|z\|^2}{2}+V(q)+U\ast\mu(q)\Big) \right]=\frac{1}{\bar{Z}}\exp(-\beta \H_\mu),
\]
where $\bar{Z}$ is the normalization constant, $\bar{Z}=\int_{\mathbf{X}}e^{-\beta \H_\mu}\,dx$, and
\begin{align*}
\A:=&\Big\{u: \X\to\R: g:= u f^{-1/2}~~\text{satisfies that}~~g\in C^1, \|\J\nabla \H_{\mu}g^2\|_1<\infty, \|\div(\J\nabla\H_\mu g^2)\|_1<\infty
\\&\qquad \|f^{1/2}g A\nabla_z(f^{-1/2}g)\|_{1}<\infty ~~\&~~\|\div (f^{1/2}g A\nabla_z(f^{-1/2}g))\|_1<\infty\Big\}.
\end{align*}
Then $f$ is the unique solution in $\A$ to the linearized equation~\eqref{eq: linearized eqn}. The conditions in $\A$ will ensure later the application of the following divergence theorem in the whole space~\cite[Section 4.5.2]{giga2010}: 
\begin{theorem}
\label{thm: div theorem}
Let $F$ be a $C^1$ vector field in $\R^n$. Assume that $\|F\|_1<\infty$ and $\|\div F\|_1<\infty$. Then
\begin{equation*}
\int_{\R^n}\div F\,d\x=0.
\end{equation*}
\end{theorem}
To verify $f\in\A$ it is sufficient to show that $\|\J\nabla \H f\|_1<\infty$. This is true under Assumption~\ref{asspt} on the confining and interaction potentials. In addition, straightforward computations give
\[
\div[\J\nabla\H_\mu f]=\J\nabla\H_\mu \cdot \nabla f+f\div(\J\nabla\H_\mu)=-\beta \J\nabla\H_\mu\cdot\nabla\H f=0 \quad\text{and}\quad \div_z[A (zf+\beta^{-1}\nabla_z f]=0,
\]
where we have used the antisymmetry of $\J$ and the fact that $\nabla_z f=-\beta z f$.
Therefore, $\K[\mu] f=0$. Suppose that there exists another solution $\bar{f}\in\A$ to Equation~\eqref{eq: linearized eqn}. We need to prove that $\bar{f}=f$. Let $g:=\bar{f} f^{-1/2}$, i.e., $\bar{f}=g f^{1/2}$. We have
\begin{align*}
\div[\J\nabla\H_\mu \bar{f}]=\J\nabla\H_\mu \cdot \nabla(gf^{1/2})=f^{1/2}\J\nabla\H_\mu \cdot \nabla g+g\J\nabla\H_\mu \cdot \nabla f^{1/2}=f^{1/2}\J\nabla\H_\mu \cdot \nabla g=f^{1/2}\div[\J\nabla\H_\mu g],
\end{align*}
where we have used the fact that $\J\H_\mu\cdot\nabla f^{1/2}=-\frac{\beta}{2}f^{1/2} \J\H_\mu\cdot\nabla\H_\mu =0$. Furthermore,
\begin{align*}
\div_z[A(z \bar{f}+\beta^{-1}\nabla_z\bar{f})]&=\div_z[A\Big(z \bar{f}+\beta^{-1}\nabla_z(f f^{-1/2}g))]
\\&=\div_z\Big[A(z \bar{f}+\beta^{-1}f\nabla_z(f^{-1/2}g)+\beta^{-1}f^{-1/2}g\nabla_z f\Big)\Big]
\\&=\beta^{-1}\div_z\Big[A f\nabla_z(f^{-1/2}g)\Big],
\end{align*}
since $z\bar{f}+\beta^{-1}gf^{-1/2}\nabla_zf=z\bar{f}-gf^{-1/2}fz=z\bar{f}-z\bar{f}=0$. Therefore, we obtain that 
\[\K[\mu]\bar{f}=f^{1/2}\div[J\nabla\H_\mu g]+\beta^{-1}\div_z\Big[A f\nabla_z(f^{-1/2}g)\Big].
\]
We now define 
\[
\Q g:=-f^{-1/2}\K[\mu]\bar{f}=-\div[J\nabla\H_\mu g]-\beta^{-1}f^{-1/2}\div_z\Big[A f\nabla_z(f^{-1/2}g)\Big].
\]
We have $Qg=0$. On the other hand, we have
\begin{align*}
0&=\langle \Q g,g\rangle_{L^2}=-\int_{\X} \Big(\div[\J\nabla\H_\mu g]+\beta^{-1}f^{-1/2}\div_z\Big[A f\nabla_z(f^{-1/2}g)\Big]\Big) g\,d\x
\\&=\frac{1}{2}\int_{\X} \div[\J\nabla\H_\mu g^2]\,d\x+\int_{\X}\div[f^{1/2}g A\nabla_z(f^{-1/2}g)]\,d\x+\beta^{-1}\int_{\X}f A\nabla_z(f^{-1/2}g)\cdot\nabla_z(f^{-1/2}g)\,d\x
\\&=\beta^{-1}\int_{\X} f A\nabla_z(f^{-1/2}g)\cdot\nabla_z(f^{-1/2}g)\,d\x.
\end{align*}
We note that the first two integrals vanish because $f\in \A$ and Theorem~\ref{thm: div theorem}. Since the matrix $A$ is positive definite, it follows that $\nabla_z(f^{-1/2}g)=0$, i.e., $f^{-1/2}g=h(q,p)$ for some function $h$. Hence $\bar{f}=gf^{1/2}=f h(q,p)$ and
\[
0=\K[\mu]\bar{f}=\K[\mu](f h)=\div[\J\nabla\H_\mu(fh)]+\div_z[A(z fh+\beta^{-1}\nabla_z(fh))]=f \J\nabla\H_\mu \cdot\nabla h+h\K[\mu]f.
\]
This implies that 
\[0=\J\nabla\H_\mu\cdot\nabla h=\nabla_p\H_\mu\cdot\nabla_q h+(-\nabla_q \H_\mu+\lambda\nabla_z \H_\mu)\cdot\nabla_p h,
\]
hence $\nabla_p h=\nabla_q h=0$, i.e., $h$ is a constant. Since $\|\bar{f}\|_{L^1}=\|f\|_{L^1}=1$, $h$ must be $1$ and therefore $\bar{f}=f$ as required.

\noindent \textbf{Step 2: the nonlinear equation}. Now suppose that $\rho_\infty$ satisfies the (nonlinear) stationary equation~\eqref{eq:stationary eqn}, i.e., $\K[\rho_\infty]\rho_\infty=0$. According to Step 1, $\rho_\infty$ must satisfy
\begin{equation}
\rho_\infty=\frac{\exp\left[-\beta\,\Big(\frac{|p|^2}{2}+\frac{\|z\|^2}{2}+V(q)+U\ast\rho_\infty(q)\Big) \right]}{\int_{\X} \exp\left[-\beta\,\Big(\frac{|p|^2}{2}+\frac{\|z\|^2}{2}+V(q)+U\ast\rho_\infty(q)\Big) \right]\,d\x},
\end{equation}
which is the claimed statement. 
\end{proof}
As a direct consequence of Proposition~\ref{prop: characterization} we obtain the following corollary, which is a natural generalization of the fact that the invariant measures of the finite dimensional underdamped Langevin and generalized Langevin dynamics are product measures.
\begin{corollary}
\label{cor: stationary}
The number of stationary states of the overdamped, underdamped and generalized Mckean-Vlasov equations is the same.
\end{corollary}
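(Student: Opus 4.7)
The plan is to exploit the product-measure structure forced by Proposition~\ref{prop: characterization} and use it to set up an explicit bijection between the stationary-solution sets of the three dynamics. By Proposition~\ref{prop: characterization}, any stationary solution $\rho_\infty\in L^1(\X)\cap L^\infty(\X)$ of the gMV equation factorises as
\[
\rho_\infty(q,p,z)=\bar\rho_\infty(q)\,\frac{e^{-\beta|p|^2/2}}{(2\pi/\beta)^{d/2}}\,\frac{e^{-\beta\|z\|^2/2}}{(2\pi/\beta)^{dm/2}},
\]
where $\bar\rho_\infty$ is the $q$-marginal. The key observation is that the interaction potential $U$ depends only on $q$, so
\[
(U\ast\rho_\infty)(q)=(U\ast\bar\rho_\infty)(q),
\]
and hence the Gibbs-type formula~\eqref{eq: invariant} reduces to the self-consistency equation
\[
\bar\rho_\infty(q)=\frac{1}{\bar Z}\exp\bigl[-\beta\bigl(V(q)+(U\ast\bar\rho_\infty)(q)\bigr)\bigr].
\]

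First I would verify that this is precisely the characterisation of the stationary states of the overdamped McKean-Vlasov equation, as given in~\cite{Tamura1984}[Lem. 4.1, Thm. 4.1], and of the $q$-marginal of the stationary states of the underdamped McKean-Vlasov equation, as proved in~\cite[Prop.~1]{DuongTugaut2016}. Conversely, given any solution $\bar\rho_\infty$ of this scalar fixed-point equation, multiplying by the Gaussian factors in $p$ (respectively in $p$ and $z$) yields a stationary solution of the uMV (respectively gMV) equation, by the converse direction of Proposition~\ref{prop: characterization}.

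Putting these two directions together, I obtain explicit maps
\[
\rho_\infty^{\mathrm{oMV}}\ \longleftrightarrow\ \rho_\infty^{\mathrm{uMV}}=\rho_\infty^{\mathrm{oMV}}\otimes \mathcal{N}(0,\beta^{-1}I_d)\ \longleftrightarrow\ \rho_\infty^{\mathrm{gMV}}=\rho_\infty^{\mathrm{oMV}}\otimes\mathcal{N}(0,\beta^{-1}I_d)\otimes\mathcal{N}(0,\beta^{-1}I_{dm}),
\]
which are mutually inverse bijections between the three solution sets. In particular their cardinalities coincide, which proves the corollary.

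The main potential obstacle is a regularity/integrability one: one must check that the stationary measures produced by tensoring with Gaussians still lie in the function class $L^1(\X)\cap L^\infty(\X)$ on which Proposition~\ref{prop: characterization} applies, and conversely that the $q$-marginal of a gMV or uMV stationary solution inherits the regularity needed to invoke the corresponding oMV characterisation. Both are immediate because the Gaussian factors are bounded and integrable and the marginal of an $L^1\cap L^\infty$ density against a Gaussian is again $L^1\cap L^\infty$, so this step poses no genuine difficulty.
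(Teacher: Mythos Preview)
Your proposal is correct and follows essentially the same approach as the paper: exploit the product-measure structure from Proposition~\ref{prop: characterization} to reduce all three stationary problems to the same self-consistency equation for the $q$-marginal, and conclude that the solution sets are in bijection. The paper's proof is more terse (it simply writes the three fixed-point formulas and notes the separability in $p$ and $z$), whereas you spell out the bijection and the regularity check explicitly, but the argument is the same.
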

\begin{proof}
Stationary solutions of the overdamped, the underdamped and the generalized McKean-Vlasov equations satisfy the following integral equations
\begin{align*}
\rho^{oMV}_\infty&=\frac{\exp\left[-\beta\,\Big(V(q)+U\ast\rho_\infty(q)\Big) \right]}{\int_{\X} \exp\left[-\beta\,\Big(V(q)+U\ast\rho_\infty(q)\Big) \right]\,dq},
\\\rho^{uMV}_\infty&=\frac{\exp\left[-\beta\,\Big(\frac{|p|^2}{2}+V(q)+U\ast\rho_\infty(q)\Big) \right]}{\int_{\X} \exp\left[-\beta\,\Big(\frac{|p|^2}{2}+V(q)+U\ast\rho_\infty(q)\Big) \right]\,d\x}=\frac{\exp\Big(-\frac{|p|^2}{2}\Big)}{\int_{\R^d}\exp\Big(-\frac{|p|^2}{2}\Big)\,dp}\rho^{MV}_\infty,
\end{align*}
and
\begin{equation*}
\rho^{gMV}_\infty=\frac{\exp\left[-\beta\,\Big(\frac{|p|^2}{2}+\frac{\|z\|^2}{2}+V(q)+U\ast\rho_\infty(q)\Big) \right]}{\int_{\X} \exp\left[-\beta\,\Big(\frac{|p|^2}{2}+\frac{\|z\|^2}{2}+V(q)+U\ast\rho_\infty(q)\Big) \right]\,d\x}=\frac{\exp\Big(-\frac{\|z\|^2}{2}\Big)}{\int_{\R^{md}}\exp\Big(-\frac{\|z\|^2}{2}\Big)\,dz}\frac{\exp\Big(-\frac{|p|^2}{2}\Big)}{\int_{\R^d}\exp\Big(-\frac{|p|^2}{2}\Big)\,dp}\, \rho^{MV}_\infty,
\end{equation*}
respectively.
Due to the separability of the variables $p$ and $z$ in the above formulas, the number of stationary solutions of the underdamped the generalized McKean-Vlasov equations are the same as that of the overdamped Mckean-Vlasov equation.
\end{proof}

\begin{remark}
A consequence of Corollary~\ref{cor: stationary} is that, for nonconvex confining potentials, and for the Curie-Weiss quadratic interaction potential, the bifurcation diagrams and even the critical temperature--see, e.g.~\cite{Dawson1983}[Result II, Remark 3.3.1]--are the same for the overdamped, underdamped and generalized McKean-Vlasov dynamics. 
\end{remark}
\begin{remark}
(Effect of colored noise on the structure of phase transitions)

The McKean-Vlasov dynamics with colored noise is given by
\begin{subequations}
\label{eq: cMV-SDE}
\begin{eqnarray}
dX(t)&=&-\nabla V(X(t))\,dt-\nabla U(X(t))\ast\rho_t\,dt+\delta n\,dt,
\\ dn(t)&=&=\alpha n(t)\,dt+\sqrt{2\alpha\beta^{-1}}dW(t),
\end{eqnarray}
where $\rho_t=\mathrm{Law}(X(t),n(t))$, $\delta,\alpha$ are given positive constants.
\end{subequations}
The Fokker-Planck equation associated to the SDE \eqref{eq: cMV-SDE} is given by
\begin{equation}
\label{eq: cMV}
\frac{\partial \rho}{\partial t}=\div_x\Big[(\nabla V(x)+\nabla U(q)\ast\rho+\delta n)\rho\Big]+\alpha\Big(\div_n(n\rho)+\beta^{-1}\Delta_n \rho\Big).
\end{equation}
There does not appear to be an obvious gradient structure for Equation~\eqref{eq: cMV}. An invariant measure $\rho_\infty=\rho_\infty(x,n)\,dxdn$ to \eqref{eq: cMV} satisfies
$$
\div_x\Big[(\nabla V(x)+\nabla U(x)\ast\rho_\infty+\delta n)\rho_\infty\Big]+\alpha\Big(\div_n(n\rho_\infty)+\beta^{-1}\Delta_n \rho_\infty\Big)=0.
$$
It implies that $\rho_\infty$ can not be written in the form
$$
\rho_\infty(x,z)=Z^{-1}\exp\bigg[-\beta\Big(V(x)+U\ast\rho_\infty(x)+\frac{\|n\|^2}{2}\Big)\bigg],
$$
For this reason, the effect of colored noise on the structure of phase transitions turns out to be nontrivial. We will study this problem in a forthcoming work.
\end{remark}

\subsection{Convergence to equilibrium}
In this section, we prove that solutions to the generalized McKean-Vlasov equation converge to a stationary solution using the free energy approach developed in~\cite{BonillaCarrilloSoler97}, see also \cite{DuongTugaut2018}. We note that in this paper we do not address the problem of well-posedness and regularity of solutions to the generalized McKean-Vlasov equation. Under appropriate assumptions on the initial data, it should be possible, in principle, to adapt the techniques of~\cite{BonillaCarrilloSoler97, BouchutDolbeault95}
to show that a solution to the generalised McKean-Vlasov equation is sufficiently smooth so that the computations in this section can be fully justified.

We consider the following \textit{free energy functional}
\begin{align}
\label{eq: free energy}
F(\rho)&=\beta^{-1}\int \rho\log\rho + \int \Big[\frac{1}{2}|p|^2+V(q)+\frac{1}{2}\|z\|^2\Big]\rho 
\nonumber \\&+\frac{1}{2}\int U(q-q')\rho(q',p',z')\rho(q,p,z)\,dq'dp'dz'\,dqdpdz
\nonumber
\\&=\int \Big[\frac{1}{2}|p|^2+V(q)+\frac{1}{2}\|z\|^2+\frac{1}{2}U\ast\rho+\beta^{-1}\log\rho\Big]\rho,
\end{align}
and define $h(t):=F(\rho(\cdot, t))$.

The following lemma establishes useful properties of the free energy functional $F(\cdot)$ and of its time derivative along the flow $\rho(\cdot)$.
\begin{lemma}
\label{lem: properties of free energy}
 The following assertions hold
\begin{enumerate}[(1)]
\item $F$ is bounded from below.
\item $h$ is a decreasing function of time and its time derivative is given by
\begin{equation}
\label{eq: time derivative of free energy}
\dot{h}(t)=-\int A(z\sqrt{\rho}+2\beta^{-1}\nabla_z\sqrt{\rho})\cdot (z\sqrt{\rho}+2\beta^{-1}\nabla_z\sqrt{\rho})\leq 0.
\end{equation}
\item The limit $\lim\limits_{t\to+\infty} h(t)$ is well-defined.
\end{enumerate}
\end{lemma}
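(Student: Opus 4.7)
For (1), I bound $F$ below by re-expressing it as a relative entropy plus nonnegative pieces. Let $\H_0(q,p,z) := \tfrac12|p|^2 + \tfrac12\|z\|^2 + V(q)$; by Assumption~\ref{asspt}(i), $V$ is coercive, so $Z_0 := \int_{\X} e^{-\beta \H_0}\,d\x < +\infty$, and the Gaussian-like measure $\mu_0 := Z_0^{-1}e^{-\beta\H_0}$ is a well-defined probability density with full support. Writing
\[
F(\rho) \;=\; \beta^{-1}\int \rho\log\frac{\rho}{\mu_0}\,d\x \;+\; \tfrac12\iint U(q-q')\rho(\x)\rho(\x')\,d\x\,d\x' \;-\; \beta^{-1}\log Z_0,
\]
the first term is a Kullback--Leibler divergence (hence $\geq 0$), the second is nonnegative since $U\geq 0$ by Assumption~\ref{asspt}(ii), and the last is a finite constant. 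This delivers the uniform lower bound $F(\rho)\geq -\beta^{-1}\log Z_0$.

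For (2), the plan is to compute $\dot h(t) = \int \Psi\,\partial_t\rho\,d\x$ with variational derivative $\Psi := \H_\rho + \beta^{-1}(1+\log\rho)$ and $\H_\rho := \tfrac12|p|^2 + \tfrac12\|z\|^2 + V(q) + U\ast\rho(q)$, and then substitute the rewriting
\[
\partial_t\rho \;=\; \div[\J\nabla\H_\rho\,\rho] \;+\; \div_z[A(z\rho + \beta^{-1}\nabla_z\rho)]
\]
established in the proof of Proposition~\ref{prop: characterization}. One integration by parts decomposes $\dot h$ into a reversible piece $-\int \nabla\Psi\cdot\J\nabla\H_\rho\,\rho\,d\x$ and a dissipative piece $-\int \nabla_z\Psi\cdot A(z\rho + \beta^{-1}\nabla_z\rho)\,d\x$. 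The reversible piece vanishes identically: the pure-Hamiltonian contribution $\nabla\H_\rho\cdot\J\nabla\H_\rho\equiv 0$ by antisymmetry of $\J$, while the remaining $\beta^{-1}\int\J\nabla\H_\rho\cdot\nabla\rho\,d\x$ equals $-\beta^{-1}\int \div(\J\nabla\H_\rho)\,\rho\,d\x = 0$ since $\J$ is a constant antisymmetric matrix (so $\div(\J\nabla\H_\rho) = \sum_{ij}\J_{ij}\partial_i\partial_j\H_\rho = 0$). In the dissipative piece, the identities $\nabla_z\Psi = z + \beta^{-1}\rho^{-1}\nabla_z\rho$ and $z\rho + \beta^{-1}\nabla_z\rho = \sqrt{\rho}\,(z\sqrt{\rho} + 2\beta^{-1}\nabla_z\sqrt{\rho})$ combine to yield exactly~\eqref{eq: time derivative of free energy}, and nonpositivity follows from positive-definiteness of $A$.

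Part (3) is then immediate: $h$ is monotone nonincreasing by (2) and bounded below by (1), so $\lim_{t\to+\infty}h(t)$ exists in $[-\beta^{-1}\log Z_0,\,h(0)]$. The main obstacle is not any individual algebraic step but the rigorous justification of differentiating $h$ under the integral sign and of discarding the boundary terms in the integrations by parts of (2): both require enough spatial decay of $\rho$ and $\nabla\rho$ to control their products with the polynomially-growing factors $\H_\rho$ and $\log\rho$, which is precisely the well-posedness/regularity theory the authors explicitly defer to arguments in the spirit of~\cite{BonillaCarrilloSoler97,BouchutDolbeault95}.
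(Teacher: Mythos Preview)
Your proof is correct and follows essentially the same approach as the paper: rewriting $F$ as a relative entropy plus nonnegative terms for (1), computing $\dot h$ via the $\J$-Hamiltonian/dissipative splitting of $\partial_t\rho$ and using antisymmetry of $\J$ for (2), and monotonicity plus boundedness for (3). The only cosmetic difference is that in (1) you take the reference measure $\mu_0\propto e^{-\beta\H_0}$ with the full potential $V$, whereas the paper first bounds $V(q)\geq \tfrac12|q|^2+C_1$ and uses a pure Gaussian reference; both yield the same conclusion.
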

\begin{proof}
We start with proving that the free energy functional is bounded from below. In fact, since $U\geq 0$ and $\inf_{q\in\R^d}\Big(V(q)-\frac{|q|^2}{2}\Big)\geq C_1>-\infty$ we have
\begin{align*}
F(\rho)&\geq C_1+\int \Big[\frac{|p|^2}{2}+\frac{|q|^2}{2}+\frac{\|z\|^2}{2}\Big]\rho+\beta^{-1}\int \rho\log\rho
\\&=C_1+\beta^{-1}\mathcal{H}(\rho||\mu)-C_2
\\&\geq C_1-C_2>-\infty,
\end{align*}
where $\mu:=\frac{1}{Z_1}\exp\Big(-\beta\big(\frac{|p|^2}{2}+\frac{|q|^2}{2}+\frac{\|z\|^2}{2}\big)\Big)$ with $Z_1$ being the normalization constant, $C_2=\ln Z_1$, and $\mathcal{H}(\rho||\mu)\geq 0$ is the relative entropy between $\rho(\x)\,dx$ and $\mu$ which is defined by
\[
\mathcal{H}(\rho||\mu):=\int \log\Big(\frac{\rho(\x)}{\mu(\x)}\Big)\rho(\x)\,d\x.
\]
It is well known that $\mathcal{H}(\rho||\mu)\geq 0$, see for instance~\cite[Lemma 1.4.1]{DupuisEllis97}. Next we prove the second assertion. Using $\H$ and $\J$ defined in \eqref{eq: H and J}, Equation~\eqref{eq: GLMV} can be written as
\begin{equation*}
\partial_t\rho=-\div\Big[\J\nabla \H\rho\Big]+\div_z\Big[A (z\rho+\beta^{-1}\nabla_z\rho)\Big],
\end{equation*}
where $\div$ on the right hand side of the above equation denotes the divergence with respect to $\x=(q,p,z)$. This representation is more convenient for the following computations:
\begin{align*}
\dot{h}(t)&=\frac{d}{dt}F(\rho(t))
\\&=\int \Big[\H+\beta^{-1}(\log\rho+1)]\partial_t\rho
\\&=\int \Big[\H+\beta^{-1}\log\rho\Big]
\cdot\Big[-\div(\J\nabla\H\rho)+\div_z(Az\rho+\beta^{-1}A\nabla_z\rho)\Big]
\\&=\int \Big[\J\H\cdot \H\rho+\beta^{-1}\J\nabla \H\cdot\nabla \rho\Big]-(Az\rho+\beta^{-1}A\nabla_z\rho)\cdot\nabla_z(H+\beta^{-1}\log\rho)
\\&\overset{(*)}{=}-\int \frac{1}{\rho} A(z\rho+\beta^{-1}\nabla_z
\rho)\cdot (z\rho+\beta^{-1}\nabla_z\rho)
\\&=-\int A(z\sqrt{\rho}+2\beta^{-1}\nabla_z\sqrt{\rho})\cdot (z\sqrt{\rho}+2\beta^{-1}\nabla_z\sqrt{\rho}),
\end{align*}
where to obtain the equality $(*)$ we have used the fact that the square-bracket in the line above vanishes due to the antisymmetry of $\J$ and integration by parts while the last equality follows from $(*)$ using the property that $\nabla_z\sqrt{\rho}=\frac{\nabla_z\rho}{2\sqrt{\rho}}$. Finally, the last assertion is a direct consequence of the first and second ones.
\end{proof}

\begin{corollary}For any $T>0$, we have
\begin{equation}
\label{eq: limit of derivative free energy}
\lim_{t\to\infty}\int_0^T\int_{\X}A\big(z\sqrt{\rho^{(t)}}+2\beta^{-1}\nabla_z\sqrt{\rho^{(t)}}\big)\cdot \big(z\sqrt{\rho^{(t)}}+2\beta^{-1}\nabla_z\sqrt{\rho^{(t)}}\big)\,d\x\,ds=0
\end{equation}
where $\rho^{(t)}$ is defined by $\rho^{(t)}(s,\x):=\rho(t+s,\x)$.
\end{corollary}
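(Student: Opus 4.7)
The plan is to read the corollary as a direct Cauchy-type consequence of Lemma \ref{lem: properties of free energy}. The key identity already established is
\[
\dot{h}(s) = -\int_{\X} A\bigl(z\sqrt{\rho(s)}+2\beta^{-1}\nabla_z\sqrt{\rho(s)}\bigr)\cdot\bigl(z\sqrt{\rho(s)}+2\beta^{-1}\nabla_z\sqrt{\rho(s)}\bigr)\,d\x,
\]
which is nonpositive because $A$ is symmetric positive definite, together with the fact that $h$ is bounded from below and hence $\lim_{t\to\infty}h(t)=:h_\infty$ exists and is finite.

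The first step is to integrate the dissipation identity on the interval $[t,t+T]$, giving
\[
h(t+T)-h(t)=-\int_t^{t+T}\int_{\X} A\bigl(z\sqrt{\rho(s)}+2\beta^{-1}\nabla_z\sqrt{\rho(s)}\bigr)\cdot\bigl(z\sqrt{\rho(s)}+2\beta^{-1}\nabla_z\sqrt{\rho(s)}\bigr)\,d\x\,ds.
\]
Next I would perform the change of variable $s\mapsto s-t$ on the right-hand side, which by the definition $\rho^{(t)}(s,\x):=\rho(t+s,\x)$ rewrites the double integral as
\[
\int_0^{T}\int_{\X} A\bigl(z\sqrt{\rho^{(t)}}+2\beta^{-1}\nabla_z\sqrt{\rho^{(t)}}\bigr)\cdot\bigl(z\sqrt{\rho^{(t)}}+2\beta^{-1}\nabla_z\sqrt{\rho^{(t)}}\bigr)\,d\x\,ds.
\]
Finally, letting $t\to\infty$, the left-hand side $h(t+T)-h(t)$ tends to $h_\infty-h_\infty=0$ because of the existence of the limit from Lemma \ref{lem: properties of free energy}(3). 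Since the integrand on the right-hand side is pointwise nonnegative, the integral itself converges to $0$, which is exactly \eqref{eq: limit of derivative free energy}.

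There is really no genuine obstacle here beyond the regularity caveat already flagged in the paper: the whole argument rests on the differential identity for $\dot h$ being justified along the solution, so one implicitly assumes enough smoothness and decay of $\rho(t,\cdot)$ for the integrations by parts in the proof of Lemma \ref{lem: properties of free energy} and for the fundamental theorem of calculus applied to $h$ on $[t,t+T]$. Under such assumptions — which the paper explicitly postpones to a future well-posedness analysis following \cite{BonillaCarrilloSoler97, BouchutDolbeault95} — the corollary reduces to the trivial Cauchy-type observation that a monotone convergent function has vanishing increments on windows of fixed length $T$.
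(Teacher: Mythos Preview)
Your proof is correct and follows essentially the same approach as the paper: both arguments amount to writing $h(t+T)-h(t)$ as the time integral of $\dot h$ over $[t,t+T]$, using the explicit dissipation formula from Lemma~\ref{lem: properties of free energy}(2), and then invoking part~(3) to conclude that this increment vanishes as $t\to\infty$. Your presentation is slightly more explicit about the change of variables $s\mapsto s-t$ that produces $\rho^{(t)}$, but the substance is identical.
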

\begin{proof}
For any given $T>0$, by part (iii) of Lemma~\ref{lem: properties of free energy}, we have
\[
\lim_{t\to\infty}\int_0^T \dot{h}(t+s)\,ds=\lim_{t\to\infty} (h(t+T)-h(t))=0.
\]
On the other hand, by part (ii), we get
\begin{align*}
0=\lim_{t\to\infty}\int_0^T \dot{h}(t+s)\,ds&=\lim_{t\to\infty}\int_0^T\int_{\X}A\big(z\sqrt{\rho^{(t)}}+2\beta^{-1}\nabla_z\sqrt{\rho^{(t)}}\big)\cdot \big(z\sqrt{\rho^{(t)}}+2\beta^{-1}\nabla_z\sqrt{\rho^{(t)}}\big)\,d\x\,ds
\end{align*}
which proves the statement of the corollary.
\end{proof}
Given any sequence $\{t_n\}\to\infty$, we will denote by $\rho^n$ the function $\rho^{(t_n)}$. The following lemma establishes compactness properties of the sequence $\{\rho^n\}$.
\begin{lemma} The following assertions hold:
\label{lem: compactness}
\begin{enumerate}[(1)]
\item The two sequences $\left\{Q_1^n\,,\,n\in\mathbb{N}\right\}$ and $\left\{Q_2^n\,,\,n\in\mathbb{N}\right\}$, defined in
\[
Q_1^n:=\int_{\X}\rho^n\log\rho^n \mathds{1}_{\rho^n\geq 1}\,d\x,
\]
and
\[
Q_2^n:=\int_{\X}\Big(\frac{|p|^2}{2}+\frac{\|z\|^2}{2}+V(q)+\frac{1}{2}U\ast \rho^n\Big)\rho^n\,d\x
\]
are uniformly bounded in time.
\item The sequence of distributions $\left\{\rho^n\,,\,n\in\mathbb{N}\right\}$ is relatively compact in $C([0;T],L^1(\X))$.
\end{enumerate}
\end{lemma}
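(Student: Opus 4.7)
\textbf{Proof plan for Lemma~\ref{lem: compactness}.}

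For part (1), the plan is to exploit the monotonicity $F(\rho^n) \leq F(\rho_0)$ from Lemma~\ref{lem: properties of free energy} together with a quantitative version of the lower bound proved there. Concretely, choose a Gaussian reference measure $\mu_\alpha(\x) = Z_\alpha^{-1}\exp\bigl(-\beta[\tfrac{|p|^2}{4}+\tfrac{\|z\|^2}{4}+\tfrac{\alpha}{2} V(q)]\bigr)$ with $\alpha\in(0,1)$, and apply the non-negativity of the relative entropy $\mathcal{H}(\rho^n\|\mu_\alpha)\geq 0$ to get
\begin{equation*}
\beta^{-1}\!\int \rho^n\log\rho^n \;\geq\; -\!\int\Bigl[\tfrac{|p|^2}{4}+\tfrac{\|z\|^2}{4}+\tfrac{\alpha}{2}V(q)\Bigr]\rho^n \;-\;\beta^{-1}\log Z_\alpha.
\end{equation*}
Inserting this into $F(\rho^n)=\beta^{-1}\!\int\rho^n\log\rho^n+Q_2^n$ and using Assumption~\ref{asspt}(i) (so that $(1-\tfrac{\alpha}{2})V(q)\geq c_\alpha|q|^4-C_\alpha$) produces the a~priori bound
\begin{equation*}
c\int(|p|^2+\|z\|^2+|q|^4)\,\rho^n \;\leq\; F(\rho_0)+C,
\end{equation*}
which, combined with the polynomial growth of $V$ and of $U$, immediately yields the uniform bound on $Q_2^n$. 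Rearranging the identity $\int\rho^n\log\rho^n = \beta(F(\rho^n)-Q_2^n)$ then controls $\int\rho^n\log\rho^n$ from above and below; splitting the entropy integral according to $\{\rho^n\geq 1\}\cup\{\rho^n<1\}$ and using the elementary inequality $-x\log x\leq 2\sqrt{x}$ on $[0,1]$ together with the weighted Cauchy--Schwarz estimate $\int\sqrt{\rho^n}\leq (\int(1+|\x|^{d+1})^{-2})^{1/2}(\int\rho^n(1+|\x|^{d+1})^2)^{1/2}$ (the second factor being bounded by the moment estimates just obtained, under Assumption~\ref{asspt}(iii)) converts this into the desired uniform bound on $Q_1^n$.

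For part (2), the plan is to establish relative compactness in $C([0,T],L^1(\X))$ by combining the Dunford--Pettis criterion (to get relative compactness of $\{\rho^n(s,\cdot)\}$ in $L^1(\X)$ for each $s$, uniformly in $s\in[0,T]$) with equicontinuity in time in the $L^1$ norm. Equi-integrability of $\{\rho^n(s,\cdot)\}$ follows from the bound on $Q_1^n$ via the de la Vall\'ee-Poussin-type estimate $\int_{\{\rho^n>M\}}\rho^n\leq (\log M)^{-1}Q_1^n\to 0$ as $M\to\infty$, uniformly in $n$ and $s$; tightness follows from the bound on $Q_2^n$ via $\int_{|\x|>R}\rho^n\leq R^{-2}\int(\tfrac{|p|^2}{2}+\tfrac{\|z\|^2}{2}+|q|^2)\rho^n$, which is controlled by $Q_2^n$ (again using $V\gtrsim|q|^4$). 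For equicontinuity in time, I would test the weak formulation of Equation~\eqref{eq: GLMV} against a countable dense family of smooth compactly supported functions $\varphi_k$ to estimate $\bigl|\int\varphi_k(\rho^n(t)-\rho^n(s))\bigr|\leq C_{\varphi_k}|t-s|$ uniformly in $n$ (using the moment bounds of step~(1)), and then upgrade this weak equicontinuity to strong $L^1$ equicontinuity by an Aubin--Lions/Arzel\`a--Ascoli argument together with the equi-integrability and tightness just established.

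The main obstacle is the time-equicontinuity step in $L^1(\X)$: although the estimate on test functions is straightforward from the PDE, passing from $C([0,T],(\mathcal{D}')_{w})$ to $C([0,T],L^1(\X)_{\text{strong}})$ requires careful use of tightness and equi-integrability to rule out mass concentration and mass escape to infinity as time varies. Following the strategy of~\cite{DuongTugaut2018, BonillaCarrilloSoler97}, one combines the uniform bounds from part~(1) with an Aubin--Lions interpolation between a space in which $\{\rho^n(s,\cdot)\}$ lies compactly and the distributional regularity in time, exploiting the fact that the hypoelliptic dissipation only acts in the $z$ variable and does not, by itself, improve spatial regularity in $(q,p)$. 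The remaining arguments are routine once these two ingredients are in place.
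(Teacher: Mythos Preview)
Your proposal follows exactly the approach the paper defers to (namely \cite[Lemma~3.5]{DuongTugaut2018} and \cite[Lemma~5.2]{BonillaCarrilloSoler97}): free-energy monotonicity combined with a relative-entropy lower bound to extract uniform moment and entropy bounds for part~(1), and Dunford--Pettis equi-integrability plus tightness together with weak-in-time equicontinuity upgraded via Arzel\`a--Ascoli for part~(2). One small technical correction: your Cauchy--Schwarz weight $(1+|\x|^{d+1})^{-2}$ is generally not integrable over $\X=\R^{d(2+m)}$, and the matching moment $\int\rho^n(1+|\x|^{d+1})^{2}$ is not controlled by the free energy alone; the standard fix in the cited references is to split $\{\rho^n<1\}$ into $\{e^{-c|\x|^2}\le\rho^n<1\}$ (where $|\log\rho^n|\le c|\x|^2$, bounded by your second-moment estimate) and $\{\rho^n<e^{-c|\x|^2}\}$ (where $|\rho^n\log\rho^n|\le C\sqrt{\rho^n}\le Ce^{-c|\x|^2/2}$ is directly integrable), which only requires the moments you already have.
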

\begin{proof}
The proof of this lemma follows the same lines as that of \cite[Lemma 3.5]{DuongTugaut2018} and~[Lemma 5.2] \cite{BonillaCarrilloSoler97}; hence we omit it here.
\end{proof}
\begin{lemma}
\label{lem: NL eqn} Let $\xi\in C^2(\R)$ with $\xi''\in L^\infty(\R)$ and $\xi(0)=0$. Then the following equation holds in the sense of distribution on $(0,T)\times \X$
\begin{equation}
\label{eq: NL eqn}
\frac{\partial \xi(\rho)}{\partial t}-\K[\rho]\xi(\rho)=\mathrm{Tr}(A)\Big(\rho\xi'(\rho)-\xi(\rho)\Big)-\beta^{-1}\xi''(\rho)A\nabla_z\rho\cdot\nabla_z\rho,
\end{equation}
where the operator $\K$ is defined in \eqref{eq:stationary oper}.
\end{lemma}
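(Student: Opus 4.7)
The plan is to derive \eqref{eq: NL eqn} by applying the chain rule to the evolution equation $\partial_t\rho=\K[\rho]\rho$ and then computing the commutator $\xi'(\rho)\K[\rho]\rho-\K[\rho]\xi(\rho)$ term by term. Formally, the chain rule gives $\partial_t\xi(\rho)=\xi'(\rho)\partial_t\rho=\xi'(\rho)\K[\rho]\rho$, so the target identity is equivalent to
\[
\xi'(\rho)\K[\rho]\rho-\K[\rho]\xi(\rho)=\mathrm{Tr}(A)\bigl(\rho\xi'(\rho)-\xi(\rho)\bigr)-\beta^{-1}\xi''(\rho)A\nabla_z\rho\cdot\nabla_z\rho.
\]

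The first step is to rewrite $\K[\rho]$ in non-divergence form. Since the coefficients $p$, $\nabla_qV+\nabla_qU\ast\rho-\lambda^T z$ and $p\lambda+Az$ are affine in the variables in which one takes divergence, I get
\[
\K[\rho]f=-p\cdot\nabla_qf+(\nabla_qV+\nabla_qU\ast\rho-\lambda^T z)\cdot\nabla_pf+(p\lambda+Az)\cdot\nabla_zf+\mathrm{Tr}(A)f+\beta^{-1}\div_z(A\nabla_zf),
\]
where the $\mathrm{Tr}(A)f$ term arises from $\div_z(Az\,f)=Az\cdot\nabla_zf+\mathrm{Tr}(A)f$ (all other drift coefficients are divergence-free in the appropriate variable). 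Applying this formula to $f=\xi(\rho)$ and using the pointwise chain rules $\nabla\xi(\rho)=\xi'(\rho)\nabla\rho$ and $\div_z(A\nabla_z\xi(\rho))=\xi'(\rho)\div_z(A\nabla_z\rho)+\xi''(\rho)A\nabla_z\rho\cdot\nabla_z\rho$, the first-order transport terms in $\K[\rho]\xi(\rho)$ collapse to $\xi'(\rho)$ times the corresponding transport terms in $\K[\rho]\rho$. Subtracting $\K[\rho]\xi(\rho)$ from $\xi'(\rho)\K[\rho]\rho$, the transport contributions cancel, the $\mathrm{Tr}(A)$ terms leave $\mathrm{Tr}(A)(\rho\xi'(\rho)-\xi(\rho))$, the first piece of the diffusion chain rule cancels, and what remains is precisely $-\beta^{-1}\xi''(\rho)A\nabla_z\rho\cdot\nabla_z\rho$.

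The main obstacle is that the above chain rule manipulations are pointwise and implicitly assume that $\rho$ is a classical (at least $C^{1,2}$) solution, whereas the statement is distributional. To justify \eqref{eq: NL eqn} in $\mathcal{D}'((0,T)\times\X)$, the standard route is a DiPerna–Lions type renormalization: mollify $\rho$ by a standard spatial convolution $\rho_\vep:=\rho\ast\varphi_\vep$, derive the equation satisfied by $\rho_\vep$ with a commutator error coming from the convolution against the linear drifts $p\cdot\nabla_q$, $(\lambda^T z)\cdot\nabla_p$, $(p\lambda+Az)\cdot\nabla_z$ and the nonlocal term $\nabla_qU\ast\rho$, apply the now-legitimate pointwise chain rule to $\xi(\rho_\vep)$ (which uses $\xi\in C^2$, $\xi''\in L^\infty$ to control $\xi''(\rho_\vep)|\nabla_z\rho_\vep|^2$), and pass to the limit $\vep\downarrow 0$; the commutators vanish because the drifts are affine and $\nabla_qU\ast\rho$ is regular enough by Assumption \ref{asspt}. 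The condition $\xi(0)=0$ ensures that $\xi(\rho)\in L^1_{\mathrm{loc}}$ whenever $\rho\in L^1_{\mathrm{loc}}$, making every distribution in \eqref{eq: NL eqn} well defined.

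Finally, I would remark that by choosing $\xi(r)=r\log r-r+1$ (suitably truncated to satisfy the hypotheses) one recovers an entropy-type identity in which the right-hand side becomes $-\beta^{-1}\rho^{-1}A\nabla_z\rho\cdot\nabla_z\rho$, which is precisely the dissipation structure used in Lemma \ref{lem: properties of free energy}; this motivates the form of \eqref{eq: NL eqn} and will be used in the subsequent passage to the limit $t\to\infty$ to identify limit points as stationary states.
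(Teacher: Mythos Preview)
Your argument is correct and is essentially the same as the paper's: both multiply the equation by $\xi'(\rho)$ and compute the defect $\xi'(\rho)\K[\rho]\rho-\K[\rho]\xi(\rho)$, the paper via the two divergence-form identities $\div_z(Az\rho)\xi'(\rho)=\div_z(Az\,\xi(\rho))+\mathrm{Tr}(A)(\rho\xi'(\rho)-\xi(\rho))$ and $\div_z(A\nabla_z\rho)\xi'(\rho)=\div_z(A\nabla_z\xi(\rho))-\xi''(\rho)A\nabla_z\rho\cdot\nabla_z\rho$, whereas you obtain the same two correction terms after first expanding $\K[\rho]$ in non-divergence form. Your DiPerna--Lions mollification/renormalization paragraph is extra detail that the paper does not spell out (it simply refers to \cite{BonillaCarrilloSoler97}), but it is the right mechanism for the distributional justification.
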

\begin{proof}
The proof follows the lines of \cite[Lemma 3.9]{BonillaCarrilloSoler97}. We sketch the main idea here. Multiplying the equation
$
\frac{\partial\rho}{\partial t}\rho-\K[\rho]\rho=0
$
with $\xi'(\rho)$, we obtain
\begin{equation}
\label{eq: temp1}
\frac{\partial \xi(\rho)}{\partial t}-\Big(\K[\rho]\rho\Big)\xi'(\rho)=0.
\end{equation}
Direct computations and using the following identities
\[
\div_z(A z\rho)\xi'(\rho)=\div_{z}(Az\xi(\rho))-\mathrm{Tr}(A)(\xi(\rho)-\rho\xi'(\rho)),
\]
and
\[
\div_z(A\nabla_z\rho)\xi'(\rho)=\div_z(A\nabla_z\xi(\rho))-\xi''(\rho)A\nabla_z\rho\cdot\nabla_z\rho,
\]
yield: 
\begin{equation}
\label{eq: temp2}
\Big(\K[\rho]\rho\Big)\xi'(\rho)=\K[\rho]\xi(\rho)-\mathrm{Tr}(A)(\xi(\rho)-\rho\xi'(\rho))-\xi''(\rho)A\nabla_z\rho\cdot\nabla_z\rho.
\end{equation}
Substituting \eqref{eq: temp2} into \eqref{eq: temp1} we obtain \eqref{eq: NL eqn}.
\end{proof}
\begin{proposition} There exists a function $\rho^\infty\in C([0,T];L^1(\X))$ such that
\[
\lim\limits_{n\to+\infty}\rho^n=\rho^\infty \quad\text{in}\quad C([0,T],L^1(\X)),
\]
and the following assertions hold
\begin{enumerate}[(1)]
\item $\sqrt{\rho^\infty}$ solves the following equation
\begin{equation}
\label{eq: eqn for sqrt of rho-infty}
\frac{\partial\sqrt{\rho^\infty}}{\partial t}-\K[\rho^\infty]\sqrt{\rho^\infty}=-\frac{\rm{Tr}(A)}{2}\sqrt{\rho^\infty}+\frac{\beta }{4}Az\cdot z\sqrt{\rho^\infty}.
\end{equation}
\item $\rho^\infty$ is a stationary solution to Equation~\eqref{eq: GLMV} and is of the form
\begin{equation}
\rho^\infty(q,p,z)=\frac{\exp\Big[-\beta\Big(\frac{\|z\|^2}{2}+\frac{|p|^2}{2}+V(q)+U\ast h^\infty(q)\Big)\Big]}{\int_{\X}\exp\Big[-\beta\Big(\frac{\|z\|^2}{2}+\frac{|p|^2}{2}+V(q)+U\ast h^\infty(q)\Big)\Big]\,d\x},
\end{equation}
where $h^\infty$ satisfies
\[
h^\infty=\frac{\exp\Big[-\beta\Big(V(q)+U\ast h^\infty(q)\Big)\Big]}{\int_{\R^d}\exp\Big[-\beta\Big(V(q)+U\ast h^\infty(q)\Big)\Big]\,dq},
\]
\end{enumerate}
\end{proposition}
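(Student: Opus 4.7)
The plan is to combine the compactness of Lemma \ref{lem: compactness} with the vanishing entropy production from the corollary preceding Lemma \ref{lem: NL eqn} in order to extract a limit whose dependence on $z$ is Gaussian. By Lemma \ref{lem: compactness}, $\{\rho^n\}$ is relatively compact in $C([0,T],L^1(\X))$, so a subsequence (not relabelled) converges to some $\rho^\infty\in C([0,T],L^1(\X))$, and, up to a further diagonal extraction, $\rho^n\to\rho^\infty$ a.e. on $(0,T)\times\X$. The dissipation bound $\int_0^T\!\!\int_\X A\bigl(z\sqrt{\rho^n}+2\beta^{-1}\nabla_z\sqrt{\rho^n}\bigr)\cdot\bigl(z\sqrt{\rho^n}+2\beta^{-1}\nabla_z\sqrt{\rho^n}\bigr)\,d\x\,ds\to 0$, together with the moment control of Lemma \ref{lem: compactness}, yields uniform $L^2$ bounds on $\nabla_z\sqrt{\rho^n}$ and on $z\sqrt{\rho^n}$. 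Hence $\nabla_z\sqrt{\rho^n}\rightharpoonup\nabla_z\sqrt{\rho^\infty}$ weakly in $L^2$, and passing to the limit in the dissipation identity forces $z\sqrt{\rho^\infty}+2\beta^{-1}\nabla_z\sqrt{\rho^\infty}=0$ a.e., so $\sqrt{\rho^\infty}(t,q,p,z)=\sqrt{h(t,q,p)}\,e^{-\beta\|z\|^2/4}$ for some nonnegative $h$.

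\textbf{Step 2: equation for $\sqrt{\rho^\infty}$.} Next I would apply Lemma \ref{lem: NL eqn} to $\rho^n$ with a smooth regularisation $\xi_\epsilon(s):=\sqrt{s+\epsilon}-\sqrt{\epsilon}$ (which satisfies $\xi_\epsilon''\in L^\infty$ and $\xi_\epsilon(0)=0$), and then send $\epsilon\to 0$. Using the pointwise limits $\rho\,\xi_\epsilon'(\rho)-\xi_\epsilon(\rho)\to-\tfrac12\sqrt{\rho}$ and $\xi_\epsilon''(\rho)\,A\nabla_z\rho\cdot\nabla_z\rho\to-\tfrac{1}{\sqrt{\rho}}A\nabla_z\sqrt{\rho}\cdot\nabla_z\sqrt{\rho}$, one obtains the distributional identity $\partial_t\sqrt{\rho^n}-\K[\rho^n]\sqrt{\rho^n}=-\tfrac{\Tr(A)}{2}\sqrt{\rho^n}+\beta^{-1}\tfrac{1}{\sqrt{\rho^n}}A\nabla_z\sqrt{\rho^n}\cdot\nabla_z\sqrt{\rho^n}$. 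Passing to the limit $n\to\infty$ exploits the strong $L^1$-convergence $\rho^n\to\rho^\infty$, which, via the moment bounds of Lemma \ref{lem: compactness}, also yields $\nabla_qU\ast\rho^n\to\nabla_qU\ast\rho^\infty$ locally uniformly in $q$ and thereby handles the nonlinear convolution term inside $\K[\rho^n]$. Finally, substituting the constraint $\nabla_z\sqrt{\rho^\infty}=-\tfrac{\beta}{2}z\sqrt{\rho^\infty}$ from Step 1 into the right-hand side converts the last term into $\tfrac{\beta}{4}(Az\cdot z)\sqrt{\rho^\infty}$, producing precisely the equation in assertion (1).

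\textbf{Step 3: stationarity, product form, and main obstacle.} Since $h(t):=F(\rho(t))$ is nonincreasing and bounded below by Lemma \ref{lem: properties of free energy}, it converges to some $h^\infty$, so $F(\rho^n(s,\cdot))=h(t_n+s)\to h^\infty$ for every $s\in[0,T]$. Lower semicontinuity of $F$ along the strongly convergent sequence $\{\rho^n\}$ (together with the moment bounds needed to control the interaction integral) gives $F(\rho^\infty(s,\cdot))=h^\infty$ for all $s$. Hence $s\mapsto F(\rho^\infty(s,\cdot))$ is constant; combined with the monotonicity of $F$ along the flow, this forces the dissipation along $\rho^\infty$ to vanish identically, and in view of the PDE of Step 2 one deduces $\K[\rho^\infty]\rho^\infty=0$, so $\rho^\infty$ is a stationary solution of \eqref{eq: GLMV}. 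Proposition \ref{prop: characterization} then yields the Boltzmann form in assertion (2), and integrating out $p$ and $z$ produces the marginal $h^\infty(q)$ satisfying the stated fixed-point equation. The main obstacle is the passage to the limit in the nonlinear, nonlocal term $\nabla_qU\ast\rho^n$ inside $\K[\rho^n]\sqrt{\rho^n}$: because $U$ is only polynomially bounded by Assumption \ref{asspt}, one must combine the strong $L^1$ compactness with the uniform polynomial moment control to upgrade convergence of the convolution to what is needed for distributional testing, and the regularisation parameter $\epsilon$ must be sent to zero with careful simultaneous control on $\xi_\epsilon'$ and $\xi_\epsilon''$ so that no terms of the form $1/\sqrt{\rho^n}$ appear in pairings other than against $\sqrt{\rho^n}$ itself.
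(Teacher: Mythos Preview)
Your Steps 1 and 2 follow the paper's argument closely, including the regularisation $\xi_\epsilon(s)=\sqrt{s+\epsilon}-\sqrt{\epsilon}$ and the use of the vanishing dissipation from the corollary to replace $\beta^{-1}(\rho^n)^{-1/2}A\nabla_z\sqrt{\rho^n}\cdot\nabla_z\sqrt{\rho^n}$ by $\tfrac{\beta}{4}(Az\cdot z)\sqrt{\rho^\infty}$ in the limit. This is exactly how the paper obtains assertion (1).

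The difference, and the gap, is in Step 3. The paper does \emph{not} go through Proposition~\ref{prop: characterization}. Instead it substitutes the factorisation $\rho^\infty=g^\infty(t,q,p)\,e^{-\beta\|z\|^2/2}$ obtained from the vanishing dissipation directly into the equation of assertion (1). The resulting equation for $\sqrt{g^\infty}$ contains the term $-\lambda^T z\cdot\bigl(\nabla_p\sqrt{g^\infty}+\tfrac{\beta}{2}p\sqrt{g^\infty}\bigr)$, which is linear in $z$ while every other term is $z$-independent; this forces $\nabla_p\sqrt{g^\infty}+\tfrac{\beta}{2}p\sqrt{g^\infty}=0$, hence $g^\infty=h^\infty(t,q)\,e^{-\beta|p|^2/2}$. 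Substituting once more, the $p$-linear term forces simultaneously $\partial_t\sqrt{h^\infty}=0$ and the fixed-point equation for $h^\infty$. Time-independence thus comes out of this hypoelliptic cascade, not from a free-energy argument.

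Your route via constancy of $F(\rho^\infty(s,\cdot))$ does not close the argument. Even if one grants that $\rho^\infty$ is itself a solution of the generalized McKean--Vlasov equation with identically vanishing dissipation (something you assert but do not derive from the equation for $\sqrt{\rho^\infty}$), one only obtains $\partial_t\rho^\infty=\div\bigl[\J\nabla\H_{\rho^\infty}\rho^\infty\bigr]$, a pure Hamiltonian transport. Constancy of $F$ along such a transport is automatic (the antisymmetric part conserves $F$ regardless), so it gives no information about stationarity and certainly does not imply $\K[\rho^\infty]\rho^\infty=0$. To rule out non-trivial Hamiltonian motion of $\rho^\infty$ you still need exactly the iterative substitution the paper performs, which exploits the linear coupling of $z$ to the $p$-equation and then of $p$ to the $q$-equation.
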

\begin{proof}
By Lemma~\ref{lem: compactness} there exists a subsequence of $\{t_n\}$, which we denote with the same index, and a function $\rho^\infty\in C([0,T];L^1(\X))$ such that
\[
\rho^n\xrightarrow{n\to\infty}\rho^\infty \quad\text{in}\quad C([0,T],L^1(\X)).
\]
As a consequence
\[
\sqrt{\rho^n}\xrightarrow{n\to\infty}\sqrt{\rho^\infty} \quad\text{in}\quad L^2([0,T]\times \X)).
\]
We now derive the limiting equation for $\sqrt{\rho^\infty}$. To focus on the main idea, we provide a formal derivation here. Applying Lemma~\ref{lem: NL eqn} to $\xi(\rho^n)=\sqrt{\rho^n}$ we obtain 
\begin{equation}
\label{eq: eqn for sqrt of rho-n}
\frac{\partial \sqrt{\rho^n}}{\partial t}-\K[\rho^n]\sqrt{\rho^n}=-\frac{\rm{Tr}(A)}{2}\sqrt{\rho^n}+\frac{\beta^{-1}}{4}\frac{A\nabla_z\rho^n\cdot\nabla_z\rho^n}{(\rho^n)^{3/2}},
\end{equation}
in the sense of distribution. We wish to pass to the limit $n\to\infty$ in this equation. For which we need appropriate compactness properties. Equation~\eqref{eq: limit of derivative free energy} implies that
\[
\lim_{t\to\infty}\Big\|\frac{\beta^2}{4} Az\cdot z\rho^n -A\nabla_z\sqrt{\rho^n}\cdot\nabla_z\sqrt{\rho^n}\Big\|_{L^1([0,T]\times\X)}=0.
\]
In addition, similarly as in \cite{Tugaut13a,Tugaut13b}, we can prove that $\{\nabla F\ast\rho^n(q)\}$ converges uniformly on each compact set towards $\nabla F\ast\rho^\infty(q)$. We now pass to the limit $n\to\infty$ in Equation~\eqref{eq: eqn for sqrt of rho-n} to get
\begin{equation*}
\frac{\partial\sqrt{\rho^\infty}}{\partial t}-\K[\rho^\infty]\sqrt{\rho^\infty}=-\frac{\rm{Tr}(A)}{2}\sqrt{\rho^\infty}+\frac{\beta }{4}Az\cdot z\sqrt{\rho^\infty},
\end{equation*}
which proves \eqref{eq: eqn for sqrt of rho-infty}. This derivation can be made rigorous following the lines of \cite[Proof of Theorem 1.2]{BonillaCarrilloSoler97} by applying Lemma~\ref{lem: NL eqn} to $\xi_\vep(\rho^n)=\sqrt{\rho^n+\vep}-\sqrt{\vep}$  instead of the square root function; then passing first to $n\to\infty$ and then to $\vep\to 0$. We omit the details here and proceed to the second part of the proposition. From \eqref{eq: limit of derivative free energy}, we get that
\begin{equation}
\label{eq: limiting eqn for rho}
\frac{\beta}{2}	z\sqrt{\rho^\infty}+\nabla_z\sqrt{\rho^\infty}=0,
\end{equation}
in the sense of distributions on $[0,T]\times \X$. Multiplying \eqref{eq: limiting eqn for rho} with $\exp\Big(\frac{\beta \|z\|^2}{4}\Big)$ we obtain
\[
\nabla_z\bigg[\exp\Big(\frac{\beta \|z\|^2}{4}\Big)\sqrt{\rho^\infty}\bigg]=0,
\]
in the sense of distribution on $[0,T]\times \X$, which implies that there exists a function $g^\infty(t,q,p)\in L^1_{\rm loc}([0,T]\times\R^d\times\R^d)$ such that
\begin{equation}
\label{eq: rho-infty}
\sqrt{\rho^\infty}=\sqrt{g^\infty(t,q,p)}\exp\Big(-\frac{\beta \|z\|^2}{4}\Big),~~\text{i.e.,}~~\rho^\infty(t,q,p,z)=g^\infty(t,q,p)\exp\Big(-\frac{\beta\|z\|^2}{2}\Big).
\end{equation} 
Substituting this representation to Equation~\eqref{eq: eqn for sqrt of rho-infty} we obtain the following equation for $\sqrt{g^\infty}$
\begin{equation}
\label{eq: eqn for g-infty}
\frac{\partial\sqrt{g^\infty}}{\partial t}=-\div_q\Big(p \sqrt{g^\infty}\Big)+\div_p\Big((\nabla V(q)+\nabla_q U\ast g^\infty(q))\sqrt{g^\infty}\Big)-\lambda^Tz\cdot\Big(\nabla_p\sqrt{g^\infty}+\frac{\beta}{2}p\sqrt{g^\infty}\Big).
\end{equation}
Since $g^\infty$ is independent of $z$, we must have
\[
\frac{\beta}{2}	p\sqrt{g^\infty}+\nabla_p\sqrt{g^\infty}=0.
\]
This implies that there is a function $h^\infty(t,q)\in L^1_{\rm{loc}}([0,T]\times\R^d)$ such that
\begin{equation}
\label{eq: g infty}
g^\infty(t,q,p)=h^\infty(t,q)\exp\Big(-\frac{\beta|p|^2}{2}\Big).
\end{equation}
Substituting this back into \eqref{eq: eqn for g-infty} we get
\begin{equation*}
\frac{\partial\sqrt{h^\infty}}{\partial t}=-p\cdot\Big(\nabla_q\sqrt{h^\infty}+\frac{\beta}{2}(\nabla V(q)+\nabla U\ast h^\infty(q))\sqrt{h^\infty}\Big).
\end{equation*}
Since $h^\infty$ is independent of $p$, we must have that 
\[
\frac{\partial\sqrt{h^\infty}}{\partial t}=
\nabla_q\sqrt{h^\infty}+\frac{\beta}{2}(\nabla V(q)+\nabla U\ast h^\infty(q))\sqrt{h^\infty}=0,
\]
which implies that $h^\infty$ solves
\begin{equation}
\label{eq: h infty}
h^\infty=\frac{\exp\Big[-\beta(V(q)+U\ast h^\infty(q))\Big]}{\int_{\R^d}\exp\Big[-\beta(V(q)+U\ast h^\infty(q))\Big]\,dq}.
\end{equation}
Substituting \eqref{eq: h infty} and \eqref{eq: g infty} back to \eqref{eq: rho-infty} we obtain that
\[
\rho^\infty(q,p,z)=\frac{\exp\Big[-\beta\Big(\frac{\|z\|^2}{2}+\frac{|p|^2}{2}+V(q)+U\ast h^\infty(q)\Big)\Big]}{\int_{\X}\exp\Big[-\beta\Big(\frac{\|z\|^2}{2}+\frac{|p|^2}{2}+V(q)+U\ast h^\infty(q)\Big)\Big]\,d\x},
\]
where $h^\infty$ solves \eqref{eq: h infty}. According to Proposition~\ref{prop: characterization}, $\rho^\infty$ is a stationary solution to Equation~\eqref{eq: GLMV}. 
\end{proof}
%
%
%
%
%
\section{GENERIC formulation of the GLMV}
\label{sec:GENERIC}

In this section, we recast the overdamped, underdamped and generalized McKean-Vlasov equations into the GENERIC (General Equation for Non-Equilibrium Reversible-Irreversible Coupling) formalism, thus putting them in a common framework. In addition, using the GENERIC formulation, we can rederive formulas for their stationary states obtained in Section \ref{sec: nonconvex}. 
\subsection{The GENERIC framework}
As suggested by its name, in the GENERIC framework an evolution equation for an unknown $\az$ in a state space $\aZ$ is decomposed into the sum of a reversible dynamics and an irreversible dynamics as follows
\begin{equation}
\label{eq:GENERICeqn1}
\frac{\partial\az}{\partial t}=\underbrace{\aL (\az)\,\frac{\delta \aE(\az)}{\delta \az}}_{\text{reversible dynamics}}+\underbrace{\aM(\az)\,\frac{\delta \aS(\az)}{\delta\az}}_{\text{irreversible dynamics}}.
\end{equation}
In the above equation, $\aL, \aM$ are two operators, $\aE, \aS:\aZ\rightarrow \R $ are two functionals that are respectively callled energy and entropy functionals, $\frac{\delta\aE}{\delta\az},\frac{\delta\aS}{\delta\az}$ are appropriate derivatives of $\aE$ and $\aS$ (such as either the Fr\'echet derivative or a gradient with respect to some inner product). The GENERIC framework impose the following conditions on $\{\aL,\aM,\aE,\aS\}$, which are called the GENERIC building blocks,
\begin{enumerate}[(C1)]
\item $\aL= \aL(\az)$ is, for each $\az$, antisymmetric operator satisfying the Jacobi identity
\begin{equation}
\label{def:Jacobi}
\{\{\aF_1,\aF_2\}_{\aL},\aF_3\}_{\aL}+\{\{\aF_2,\aF_3\}_{\aL},\aF_1\}_{\aL}+\{\{\aF_3,\aF_1\}_{\aL},\aF_2\}_{\aL}=0,
\end{equation}
for all functions $\aF_i\colon\aZ\rightarrow\R,\ i=1,2,3$,  where the Poisson bracket $\{\cdot,\cdot\}_{\aL}$ is defined via
\begin{equation}
\label{def:bracket}
\{\aF,\aG\}_{\aL}\colonequals\frac{\delta \aF}{\delta\az} \cdot \aL\,\frac{\delta\aG}{\delta\az}.
\end{equation}
\item $\aM=\aM(\az)$ is symmetric and positive semidefinite.
\item The following \textit{degeneracy condition} holds 
\begin{equation}
\label{def:degeneracy}
\aL\,\frac{\delta\aS}{\delta\az}=0,\quad
\aM\,\frac{\delta\aE}{\delta\az}=0.
\end{equation}
\end{enumerate}
Being formulated in the GENERIC framework, an evolution equation automatically justifies the first and second laws of thermodynamics, i.e., along its solutions energy is conserved while entropy is non-decreasing.  Indeed, they are direct consequences of the above conditions:
\begin{align*}
\frac{d\aE(\az(t))}{dt} & =\frac{\delta\aE}{\delta\az}\cdot \frac{d\az}{dt}=\frac{\delta\aE}{\delta\az}\cdot\left(\aL\,\frac{\delta\aE}{\delta\az}+\aM\,\frac{\delta\aS}{\delta\az}\right)= \frac{\delta\aE}{\delta\az}\cdot \aL\,\frac{\delta\aE}{\delta\az}+\frac{\delta\aE}{\delta\az}\cdot \aM\,\frac{\delta\aS}{\delta\az}= 0,\\
\frac{d\aS(\az(t))}{dt} & =\frac{\delta\aS}{\delta\az}\cdot \frac{d\az}{dt}=\frac{\delta\aS}{\delta\az}\cdot\left(\aL\,\frac{\delta\aE}{\delta\az}+\aM\,\frac{\delta\aS}{\delta\az}\right)=\frac{\delta\aS}{\delta\az}\cdot\aM\,\frac{\delta\aS}{\delta\az}\geq0.
\end{align*}
In addition, in the GENERIC framework, equilibria can be obtained by the maximum entropy principle: they are the maximizers of the entropy $\aS$ under the constraint that the energy is constant $\aE(\az)=\aE_0$ \cite{OG97a,Mielke11}. Define $\Phi(\az)=\aS(\az)-\lambda (\aE(\az)-\aE_0)$ for some $\lambda\in \R$, which plays the role of a Lagrange multiplier. The states $\az_\infty$ that maximize the entropy under the constraint $\aE(\az)=E_0$ are solutions to
\begin{equation*}
\begin{cases}
\frac{\delta \Phi(\az)}{\delta \az}=0,\\
\aE(\az)=\aE_0,
\end{cases}
\end{equation*}
which is equivalent to
\begin{equation}
\label{eq: stationary}
\begin{cases}
\frac{\delta \aS(\az)}{\delta \az}=\lambda\frac{\delta \aE(\az)}{\delta \az},\\
\aE(\az)=\aE_0.
\end{cases}
\end{equation}
Furthermore, since $\Phi(z_t)$ is also a nondecreasing function of time, it follows that all solutions $\az_t$ of the GENERIC equation \eqref{eq:GENERICeqn1} converge to $\az_\infty$ as $t\rightarrow \infty$. We refer to \cite[Property 3]{OG97a} for more detailed discussion.

In the next sections, we recast the MV dynamics, the VFP dynamics and the GLMV dynamics into the GENERIC framework and we use Equation~\eqref{eq: stationary} to establish formulas for their stationary solutions recovering the results in the previous section.
\subsection{GENERIC formulation of the generalized McKean-Vlasov equation}
It has been known that both the overdamped and underdamped McKean-Vlasov equations are Wasserstein gradient flows~\cite{carrillo2003,Carrillo2006}. More recently, in~\cite{DPZ13} it was shown that the underdamped McKean-Vlasov equation can be cast into the GENERIC framework. In this section, we apply the technique therein for the generalized McKean-Vlasov equation. To this end, we need to construct the building block $\{\aZ,\aL,\aM,\aE,\aS\}$ and verify Conditions $(C1)-(C3)$ in the previous section.

Let $\rho$ be a smooth solution of the the generalized McKean-Vlasov equation \eqref{eq: GLMV}. We define
\begin{equation}
\Hc(\rho):=\int\Big(\frac{|p|^2}{2}+V(q)+\frac{1}{2}(U\ast\rho)+\frac{\|z\|^2}{2}\Big)\rho\,d\x. 
\end{equation}
Let us introduce an auxiliary time-dependent variable $e$ whose evolution is given by
\begin{equation}
\label{eq: eqn e}
\frac{d}{dt}e=-\frac{d}{dt}\Hc(\rho_t)=\int A z\cdot z\rho\,d\x-\beta^{-1}\rm{Tr}(A).
\end{equation}
We now show that the coupled system for the variables $(\rho,e)$,
\begin{equation}
\label{eq: coupled system}
\partial_t\begin{pmatrix}
\rho\\e
\end{pmatrix}=\begin{pmatrix}
\div(\rho\J\nabla \H_\rho)+\beta^{-1}\div(\rho A\nabla_z\log\rho)+\div(\rho A z)\\
-\beta^{-1}\rm{Tr}(A)+\int Az\cdot z\rho\,d\x
\end{pmatrix},
\end{equation}
can be formulated in the GENERIC framework. We construct the building blocks as follows
\begin{align*}
&\az=(\rho,e),\quad \aZ=\mathcal{P}_2(\X)\times \R,\quad \aE(\rho,e)=\Hc(\rho)+e,\quad \aS(\rho,e)=-\beta^{-1}\int\rho\log\rho+e,
\\&\aL(\rho,e)=\begin{pmatrix}
\aL_{\rho\rho}&0
\\ 0& 0
\end{pmatrix},\quad \text{and}\quad \aM(\rho,e)=\begin{pmatrix}
\aM_{\rho\rho}&\aM_{\rho e}\\
\aM_{e \rho}& \aM_{e e},
\end{pmatrix}
\end{align*}
where the operators defining $\aL$ and $\aM$ are given by
\begin{align*}
&\aL_{\rho\rho}\xi=\div\Big(\rho \J\nabla\xi\Big),\quad \aM_{\rho\rho}\xi=-\div(\rho A\nabla_z\xi),\quad \aM_{\rho e}r=r\div\Big(\rho Az\Big)
\\& \aM_{e\rho}\xi=-\int Az\cdot\nabla_z\xi\, \rho\,d\x,\quad \aM_{e e}r=r\int Az\cdot z\,\rho\,d\x
\end{align*}
with $\J$ being the antisymmetric matrix defined in~\eqref{eq: H and J}. We compute the first variations of $\aE$ and $\aS$:
\begin{equation}
\label{eq: derivative}
\frac{\delta\aE}{\delta\az}=\begin{pmatrix}
V(q)+\frac{|p|^2}{2}+U\ast\rho +\frac{\|z\|^2}{2}\\1
\end{pmatrix}=\begin{pmatrix}
\H_\rho\\1
\end{pmatrix},\qquad\frac{\delta\aS}{\delta\az}=\begin{pmatrix}
-\beta^{-1}(\log\rho+1)\\1
\end{pmatrix}.
\end{equation}
Thus the GENERIC equation associated to these blocks is given by
\begin{align*}
\partial_t\begin{pmatrix}
\rho\\e
\end{pmatrix}=\aL\frac{\delta\aE}{\delta\az}+\aM\frac{\delta\aS}{\delta\az}&=\begin{pmatrix}
\aL_{\rho\rho}\H_\rho\\0
\end{pmatrix}+\begin{pmatrix}
-\beta^{-1}\aM_{\rho\rho}(\log\rho+1)+\aM_{\rho e} 1\\
-\beta^{-1}\aM_{e \rho}(\log\rho+1)+\aM_{ee} 1
\end{pmatrix}
\\&=\begin{pmatrix}
\div(\rho\J\nabla \H_\rho)\\0
\end{pmatrix}+\begin{pmatrix}
\beta^{-1}\div(\rho A\nabla_z\log\rho)+\div(\rho A z))\\\beta^{-1}\int \rho\,A z\cdot\nabla_z\log \rho\,d\x+\int Az\cdot z\rho\,d\x
\end{pmatrix}
\\&=\begin{pmatrix}
\div(\rho\J\nabla \H_\rho)+\beta^{-1}\div(\rho A\nabla_z\log\rho)+\div(\rho A z)\\
\beta^{-1}\int \rho A z\cdot\nabla_z\log \rho\,d\x+\int Az\cdot z\rho\,d\x
\end{pmatrix},
\end{align*}
which is indeed the system~\eqref{eq: coupled system}. By straight but lengthy computations one can verify all the conditions imposed on the GENERIC framework for the building blocks constructed above. 
\subsection{Invariant measures}
We now look for stationary solutions of the coupled system~\eqref{eq: coupled system} using~\eqref{eq: stationary}. A stationary solution $\az_\infty=(\rho_\infty,e_\infty)$, where $\rho_\infty$ is a probability measure, of \eqref{eq: GLMV}-\eqref{eq: eqn e} maximizes the entropy $\aS(\az)$ under the constraints that $\aE(\az)$ is a constant and that $\int \rho (dqdp)=1$. Define $\Phi(\az)=\aS(\az)-\lambda_1(\aE(\az)-\aE_0)-\lambda_2 (\int_{\R^{2d}} \rho (dqdp)-1)$ for some $\lambda_1,\lambda_2\in \R$ then $(\rho_\infty,e_\infty)$ satisfies the following equation
\begin{equation}
\label{eq: stationary3}
\begin{cases}
\frac{\delta \Phi(\az)}{\delta\az}=0,\\
\aE(\az)=\aE_0,\\
\int \rho_\infty(dqdpdz)=1.
\end{cases}
\end{equation}
Using the computations in \eqref{eq: derivative}, we have
$$
\frac{\delta \Phi(\az)}{\delta\az}=\frac{\delta \aS(\az)}{\delta\az}-\lambda_1\frac{\delta \aE(\az)}{\delta\az}-\begin{pmatrix}
\lambda_2\\0
\end{pmatrix}=\begin{pmatrix}
-\beta^{-1}(\log\rho+1)-\lambda_1 \H_\rho-\lambda_2\\
1-\lambda_1
\end{pmatrix}.
$$
Therefore, equation \eqref{eq: stationary3} becomes
\begin{equation*}
\begin{cases}
-\beta^{-1} (\log \rho_\infty +1)=\lambda_1 \H_{\rho_\infty}+\lambda_2,\\
1=\lambda_1,\\
\H_\rho+e=\aE_0,\\
\int \rho_\infty(dqdp)=1.
\end{cases}
\end{equation*}
Substituting $\lambda_1=1$ into the first equation and then combining with the last equation, we obtain that $\rho_\infty$ satisfies
$$
\rho_\infty=\frac{\exp\big(-\beta \H_{\rho_\infty}\big)}{\int \exp\big(-\beta \H_{\rho_\infty}\big)\,d\x}=\frac{\exp\Big(-\beta(V(q)+\frac{|p|^2}{2}+U\ast\rho +\frac{\|z\|^2}{2})\Big)}{\int \exp\Big(-\beta(V(q)+\frac{|p|^2}{2}+U\ast\rho +\frac{\|z\|^2}{2})\Big)}.
$$
This is exactly Equation \eqref{eq: invariant} obtained in Section \ref{sec: nonconvex}. Similarly we can derive the formula in Corollary \ref{cor: stationary} for a stationary solution $\rho_\infty^{VFP}$ of the VFP dynamics using its GENERIC formulation in \cite{DPZ13}.
\section{White noise (Markovian) limits}
\label{sec: asymptotic limit}
In this section, we derive the underdamped McKean-Vlasov dynamics from the white noise (Markovian) limit of the generalized McKean-Vlasov dynamics both for the particle system and for the mean-field PDE. We apply the formal perturbation expansions method developed in \cite{PavliotisStuart2008,Pavl2014}. 
\subsection{From the generalized to the underdamped Langevin equations}
We will derive \eqref{e: VFP particles} from \eqref{eq: GLMV particles} in the limit of vanishing correlation time of the noise which corresponds to rescaling $\lambda$ and $A$ in \eqref{eq: GLMV particles} according to $\lambda\mapsto\lambda/\vep$ and $A\mapsto A/\vep^2$. The SDE~\eqref{eq: GLMV particles} becomes
\begin{subequations}
\label{eq: GLMV particles 2}
\begin{eqnarray}
dQ_i^\vep&=&P_i^\vep\,dt,
\\dP_i^\vep&=&-\nabla V(Q_i^\vep)\,dt-\frac{1}{N}\sum_{j=1}^N \nabla U(Q_i^\vep-Q_j^\vep)\,dt+\frac{1}{\vep}\lambda^T Z_i^\vep \,dt,
\\ dZ_i^\vep&=&-\frac{1}{\vep} P_i^\vep \lambda \,dt-\frac{1}{\vep^2} A Z_i\,dt+\frac{1}{\vep}\sqrt{2 \beta^{-1} A}dW_i.
\end{eqnarray}
\end{subequations}
\begin{proposition}
\label{prop: limits particle}
Let $\{Q_i^\vep, P_i^\vep, Z_i^\vep\}$ be the solution of \eqref{eq: GLMV particles 2}, and assume that the matrix $A$ is invertible. Then $(Q_i^\vep (t), P_i^\vep (t))$ converges weakly to the solution of the Vlasov-Fokker-Planck equation \eqref{e: VFP particles}, where the friction coefficient is given by the formula
\begin{equation}
\label{eq: friction formula}
\gamma=\langle \lambda, A^{-1}\lambda\rangle.
\end{equation}
\end{proposition}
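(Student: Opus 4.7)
The plan is to apply the formal perturbation expansion (multiscale analysis) method for the backward Kolmogorov equation, as in Chapter 11 of \cite{Pavl2014}. First I would write the generator of the process $\{(Q_i^\vep,P_i^\vep,Z_i^\vep)\}$ as
\[
\L^\vep = \L_0 + \tfrac{1}{\vep}\L_1 + \tfrac{1}{\vep^2}\L_2,
\]
where $\L_0$ contains the $Q,P$-drift (including the nonlinear interaction) and acts only on $(Q,P)$; the skew-symmetric coupling operator is
\[
\L_1 = \sum_{i=1}^N \bigl(\lambda^T Z_i\cdot\nabla_{P_i} - \lambda P_i\cdot\nabla_{Z_i}\bigr);
\]
and $\L_2 = \sum_{i=1}^N(-AZ_i\cdot\nabla_{Z_i}+\beta^{-1}A:\nabla_{Z_i}^2)$ is the fast Ornstein--Uhlenbeck generator in the $Z$-variables, whose unique invariant measure $\mu(dz)$ is a product of Gaussians $\mathcal{N}(0,\beta^{-1}I)$. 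I would then seek the solution of the backward Kolmogorov equation $\partial_t u^\vep = \L^\vep u^\vep$ in the form $u^\vep = u_0 + \vep u_1 + \vep^2 u_2 + \cdots$ and equate powers of $\vep$.

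At order $\vep^{-2}$ one obtains $\L_2 u_0 = 0$, so $u_0=u_0(t,Q,P)$ does not depend on $Z$. At order $\vep^{-1}$ the equation reads $\L_2 u_1 = -\L_1 u_0 = -\sum_i\lambda^T Z_i\cdot\nabla_{P_i}u_0$; the Fredholm solvability condition is trivially satisfied because $\int Z_i\,\mu(dz)=0$. Using the fact that for each $i$ the ansatz $\phi(Z_i)=\lambda^T A^{-1}Z_i$ (well defined since $A$ is invertible) satisfies $\L_2\phi(Z_i) = -\lambda^T Z_i$, I obtain the explicit corrector
\[
u_1 = \sum_{i=1}^N \bigl(\lambda^T A^{-1}Z_i\bigr)\cdot\nabla_{P_i}u_0.
\]
At order $\vep^0$, the equation $\L_2 u_2 = \partial_t u_0 - \L_0 u_0 - \L_1 u_1$ admits a solution if and only if the right-hand side has zero average against $\mu$, which gives the homogenized equation
\[
\partial_t u_0 = \L_0 u_0 + \int \L_1 u_1\,\mu(dz).
\]
A direct computation, using $\int (Z_i)_a (Z_j)_b\,\mu(dz)=\beta^{-1}\delta_{ij}\delta_{ab}$, shows that the contribution from the drift piece of $\L_1$ applied to $u_1$ yields the dissipative term $-\sum_i \lambda^T A^{-1}\lambda\, P_i\cdot\nabla_{P_i}u_0$, while the contribution from the noise-like piece yields $\beta^{-1}\sum_i \lambda^T A^{-1}\lambda : \nabla_{P_i}^2 u_0$. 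Recognising $\gamma:=\lambda^T A^{-1}\lambda = \langle\lambda,A^{-1}\lambda\rangle$, the limiting equation becomes
\[
\partial_t u_0 = \L_0 u_0 + \sum_{i=1}^N\Bigl(-\gamma P_i\cdot\nabla_{P_i}u_0 + \beta^{-1}\gamma:\nabla_{P_i}^2 u_0\Bigr),
\]
which is precisely the backward Kolmogorov operator of the underdamped interacting system~\eqref{e: VFP particles}. By the uniqueness of the limiting martingale problem, this establishes the weak convergence $(Q_i^\vep,P_i^\vep)\Rightarrow (Q_i,P_i)$.

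The main obstacle, as usual for such perturbation-expansion arguments, lies in promoting the formal expansion to a rigorous weak convergence statement. This requires (i) well-posedness of the Poisson equation $\L_2\phi=-\L_1 u_0$ in a suitable weighted $L^2$ space with controlled growth of $\phi$ in $Z_i$, (ii) a priori moment bounds on $(Q_i^\vep,P_i^\vep,Z_i^\vep)$ uniform in $\vep$ to handle the polynomial growth of $V$, $U$ and of the correctors, and (iii) a tightness argument for the laws of $(Q_i^\vep,P_i^\vep)$ on path space together with identification of the limit via the martingale problem associated with the effective generator derived above. These steps can be carried out following the framework of \cite{PavliotisStuart2008,Pavl2014}; in particular, the Lyapunov-function approach used there for generalised Langevin dynamics without interaction extends to the present finite-$N$ weakly interacting case essentially unchanged, since the interaction term $\frac{1}{N}\sum_j\nabla U(Q_i-Q_j)$ is part of $\L_0$ and plays no role in the $\vep$-expansion.
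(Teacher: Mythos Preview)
Your proposal is correct and follows essentially the same approach as the paper: both apply the formal multiscale expansion to the backward Kolmogorov equation, identify the fast Ornstein--Uhlenbeck generator in $Z$, solve the cell problem $u_1 = \lambda^T A^{-1}Z\cdot\nabla_P u_0$, and read off the effective friction $\gamma=\lambda^T A^{-1}\lambda$ from the solvability condition at order $\vep^0$. The only cosmetic differences are that the paper vectorizes over all particles and swaps the labels of $\L_0$ and $\L_2$, and it omits the discussion of rigorous justification (moment bounds, tightness, martingale problem) that you sketch at the end, treating the expansion as purely formal.
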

\begin{proof}
We rewrite \eqref{eq: GLMV particles 2} in a compact matrix form 
\begin{subequations}
\label{eq: GLMV particles 3}
\begin{eqnarray}
d\Qu^\vep&=&\Pu^\vep\,dt,
\\ d\Pu^\vep&=& F(\Qu)\,dt+\frac{1}{\vep}\lambda^T \Zu^\vep \,dt,
\\ d\Zu^\vep&=&-\frac{1}{\vep} \Pu^\vep \lambda \,dt-\frac{1}{\vep^2} A \Zu^\vep \,dt+\frac{1}{\vep}\sqrt{2 \beta^{-1} A}\, d\Wu.
\end{eqnarray}
\end{subequations}
where we have used the following notation
\[
\Qu^\vep=(Q_1^\vep,\ldots,Q_N^\vep)^T,\quad  \Pu^\vep=(P_1^\vep,\ldots,P_N^\vep)^T,\quad 
\Zu^\vep=(Z_1^\vep,\ldots,Z_N^\vep)^T,\quad \Wu=(W_1,\ldots,W_N)^N,
\]
and
\[
F(\Qu^\vep)=\Big(-\nabla V(Q_1^\vep)-\frac{1}{N}\sum_{j=1}^N \nabla U(Q_1^\vep-Q_j^\vep), \ldots, -\nabla V(Q_N^\vep)-\frac{1}{N}\sum_{j=1}^N \nabla U(Q_N^\vep-Q_j^\vep)\Big)^T.
\]
The backward Kolmogorov equation associated to \eqref{eq: GLMV particles 3} is
\begin{equation}
\label{eq: GLMV exp}
\frac{\partial u^\vep}{\partial t}=\Big(\frac{1}{\vep^2}\L_0+\frac{1}{\vep}\L_1+\L_2\Big) u,
\end{equation}
with
\begin{eqnarray*}
\L_0&=& - A \zu \cdot \nabla_{\zu}+\beta^{-1} A:D^2_{\zu},
\\ \L_1&=& \lambda^T \zu\cdot\nabla_{\pu}-\pu \lambda\cdot \nabla_{\zu},
\\ \L_2&=&\pu\cdot\nabla_{\qu}+F(\qu)\cdot\nabla_{\pu}.
\end{eqnarray*}
We seek for a solution to \eqref{eq: GLMV exp} in the form of a power series expansion in $\vep$:
$$
u^\vep=u_0+\vep u_1+\vep^2 u_2+\dots.
$$
By substituting this expansion into \eqref{eq: GLMV exp} and equating powers of $\vep$, we obtain the following sequence of equations
\begin{subequations}
\begin{eqnarray}
\L_0 u_0&=&0,\label{e: eqn1}
\\ -\L_0 u_1&=&\L_1 u_0,\label{e: eqn2}
\\-\L_0 u_2&=& \L_1 u_1+\L_2 u_0-\frac{\partial u_0}{\partial t}, \label{e: eqn3}
\\ \ldots &=&\ldots.\notag
\end{eqnarray}
\end{subequations}
It follows from the first equation \eqref{e: eqn1} that to leading order, the solution of the Kolmogorov equation is independent of the auxiliary variables $\zu$, $u_0 = u(\qu, \pu, t)$. The solvability of the second equation \eqref{e: eqn2} reads
$$
\int_{\R^{dmN}}\L_1 u_0 \, e^{-\frac{\beta}{2}\|\zu\|^2}\,d\zu=0,
$$
which is fulfilled since $\L_1 u_0=\lambda^T\zu \cdot\nabla_{\pu}u$, thus
$$
\int_{\R^{dmN}}\L_1 u_0 e^{-\frac{\beta}{2}\|\zu\|^2}\,d\zu= \int_{\R^{dmN}}\big(\lambda^T\zu \cdot\nabla_{\pu}u\big)\, e^{-\frac{\beta}{2}\|\zu\|^2}\,d\zu=0.
$$
Equation~\eqref{e: eqn2} becomes
$$
-\L_0 u_1=\lambda^T\zu \cdot\nabla_{\pu}u
$$
The solution to this equation is
$$
u_1(q,p,t)=\zu A^{-1}\lambda\cdot\nabla_{\pu} u.
$$
Note that in the above equation, we ignore the part of the solution that lies in the null space of $\L_0$ since it will not affect the limiting equation. The solvability condition for the third equation \eqref{e: eqn3} gives
$$
\int \Big(\L_1 u_1+\L_2 u-\frac{\partial u}{\partial t}\Big) (2\pi \beta^{-1})^{dmN} e^{-\frac{\beta}{2}\|\zu\|^2}\,d\zu=0.
$$
Since $\L_2 u-\frac{\partial u}{\partial t}$ is independent of $\zu$, we deduce\,  from the above solvability equation that
\begin{equation}
\label{eq: u}
\frac{\partial u}{\partial t}=\L_2 u +\int (\L_1 u_1) (2\pi \beta^{-1})^{dmN} e^{-\frac{\beta}{2}\|\zu\|^2}\,d\zu.
\end{equation}
We compute the second term on the right-hand side of \eqref{eq: u} using the properties of Gaussian distributions:
$$
\int (\L_1 u_1) (2\pi \beta^{-1})^{dmN} e^{-\frac{\beta}{2}\|\zu\|^2}\,d\zu=\langle \lambda, A^{-1}\lambda\rangle \Big(-\pu\cdot\nabla_{\pu} u+\beta^{-1} \Delta_{\pu} u\Big).
$$
Substituting this calculation back into \eqref{eq: u} we obtain that $u$ satisfies the following PDE
$$
\frac{\partial u}{\partial t}=\pu\cdot\nabla_{\qu}u +F(\qu)\cdot\nabla_{\pu} u+\langle \lambda, A^{-1}\lambda\rangle \Big(-\pu\cdot\nabla_{\pu} u+\beta^{-1} \Delta_{\pu} u\Big).
$$
This is exactly the backward Kolmogorov equation of
the Vlasov-Fokker-Planck system \eqref{e: VFP particles} with $\gamma=\langle \lambda, A^{-1}\lambda\rangle$. 
\end{proof}
\subsection{From the generalized McKean-Vlasov equation to the underdamped Mckean-Vlasov equation at the mean-field level}
Under the same rescaling, $\lambda\mapsto \lambda/\vep$ and $A\mapsto A/\vep^2$, as in the previous section system \eqref{e:mckean} becomes
\begin{subequations}\label{eq: scaledGLMV}
\begin{eqnarray}
dQ^\vep(t)&=&P^\vep(t)\,dt,
\\dP^\vep(t)&=&-\nabla V(Q^\vep(t))\,dt-\nabla_q U\ast\rho^\vep_t(Q^\vep_t)\,dt+\frac{1}{\vep}\lambda^T Z^\vep(t)\,dt,
\\ dZ^\vep(t)&=&-\frac{1}{\vep}	P^\vep(t) \lambda\,dt-\frac{1}{\vep^2} AZ^\vep(t)+\frac{1}{\vep}\sqrt{2\beta^{-1}A}dW(t).
\end{eqnarray}
\end{subequations}
\begin{proposition}
\label{prop: limits mean-field}
Let $(Q^\vep, P^\vep, Z^\vep)$ be the solution of \eqref{eq: scaledGLMV}, and assume that the matrix $A$ is invertible. Then $(Q^\vep, P^\vep)$ converges weakly to the solution of the Vlasov-Fokker-Planck equation
\begin{subequations}
\label{eq: VFP SDE}
\begin{eqnarray}
dQ(t)&=&P(t)\,dt,
\\dP(t)&=&-\nabla V(Q(t))\,dt-\nabla_q U\ast\rho_t(Q(t))\,dt-\gamma P(t)\,dt +\sqrt{2\gamma\beta^{-1}} dW(t),
\end{eqnarray}
\end{subequations}
where the friction coefficient is given by formula \eqref{eq: friction formula}.
\end{proposition}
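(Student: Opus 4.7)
The plan is to carry out the same formal multiscale expansion as in Proposition~\ref{prop: limits particle}, but now at the level of the nonlinear (McKean--Vlasov) SDE~\eqref{eq: scaledGLMV}. The scaling $\lambda\mapsto\lambda/\vep$, $A\mapsto A/\vep^2$ only touches the $(p,z)$-coupling and the Ornstein--Uhlenbeck dynamics in $z$, which are identical in structure to the particle-system case; the new ingredient, the convolution $\nabla_q U\ast\rho^\vep_t$, sits at $O(1)$ alongside $-\nabla V$ and plays exactly the role that $F(\Qu)$ played before. Consequently the $\L_0,\L_1$ operators and the invariant density in $z$ are unchanged, and all the solvability computations carry over verbatim.

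Concretely, I would associate to~\eqref{eq: scaledGLMV} the (nonlinear) backward Kolmogorov operator acting on test functions $u(q,p,z,t)$ and decompose it as $\L^\vep=\tfrac{1}{\vep^2}\L_0+\tfrac{1}{\vep}\L_1+\L_2[\rho^\vep_t]$, where
\begin{align*}
\L_0 &= -A z\cdot\nabla_z+\beta^{-1}A:D^2_z,\\
\L_1 &= \lambda^T z\cdot\nabla_p - p\lambda\cdot\nabla_z,\\
\L_2[\rho] &= p\cdot\nabla_q-\bigl(\nabla V(q)+\nabla_q U\ast\rho(q)\bigr)\cdot\nabla_p.
\end{align*}
The operator $\L_0$ is the generator of an OU process in $z$ with invariant Gaussian density $\mu_z(z)\propto e^{-\beta\|z\|^2/2}$; its null space on $L^2(\mu_z)$ consists of functions independent of $z$, and the Fredholm alternative for $\L_0$ requires vanishing of the $\mu_z$-average.

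Next I would insert the ansatz $u^\vep=u_0+\vep u_1+\vep^2 u_2+\dots$ and match powers of $\vep$. The $O(\vep^{-2})$ equation $\L_0 u_0=0$ forces $u_0=u_0(q,p,t)$. The $O(\vep^{-1})$ equation $-\L_0 u_1=\L_1 u_0=(\lambda^T z)\cdot\nabla_p u_0$ satisfies the solvability condition because the first moment of $\mu_z$ vanishes, and it is solved (modulo $\ker\L_0$) by $u_1=(A^{-1}\lambda\cdot z)\cdot\nabla_p u_0$, exactly as in Proposition~\ref{prop: limits particle}. The $O(1)$ equation $-\L_0 u_2=\L_1 u_1+\L_2[\rho^\vep] u_0-\partial_t u_0$ then yields the limiting equation via the solvability condition
\[
\partial_t u_0=\L_2[\rho_t]u_0+\int_{\R^{dm}}(\L_1 u_1)\,\mu_z(z)\,dz,
\]
and the Gaussian integral on the right produces $\langle\lambda,A^{-1}\lambda\rangle(-p\cdot\nabla_p u_0+\beta^{-1}\Delta_p u_0)$ by the standard second-moment identity for $\mu_z$. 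Identifying $\gamma=\langle\lambda,A^{-1}\lambda\rangle$ one reads off the backward Kolmogorov equation of~\eqref{eq: VFP SDE}.

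The one genuinely new point, and what I expect to be the main obstacle, is the treatment of the nonlinear convolution in $\L_2[\rho^\vep]$ when passing $\vep\to 0$. At leading order the law of $(Q^\vep,P^\vep,Z^\vep)$ must factorise as $\rho_0(t,q,p)\otimes\mu_z(z)$ so that the $q$-marginal of $\rho^\vep_t$ converges to the $q$-marginal $\bar\rho_t$ of the limiting Vlasov--Fokker--Planck law, and hence $\nabla_q U\ast\rho^\vep_t(q)\to\nabla_q U\ast\bar\rho_t(q)$; this is the term that appears in~\eqref{eq: VFP SDE}. A fully rigorous justification would require either a propagation-of-chaos argument combined with Proposition~\ref{prop: limits particle}, or weak-convergence/tightness estimates uniformly in $\vep$ together with Lipschitz control of $\nabla_q U\ast(\cdot)$, in the spirit of~\cite{BolleyGuillinMalrieu2010, Monmarche2017}. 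At the formal level adopted here, however, the convolution term simply rides along inside $\L_2[\rho^\vep]$ and the derivation of the limit is identical to the particle case.
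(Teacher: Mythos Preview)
Your formal derivation is correct and essentially mirrors the argument of Proposition~\ref{prop: limits particle}, but the paper proceeds along a genuinely different route. Instead of the backward Kolmogorov operator acting on test functions, the paper works with the \emph{forward} Fokker--Planck equation~\eqref{eq: FPscaledGLMV} for the law $\rho^\vep$, and introduces the change of unknown $f^\vep:=\rho^\vep/\rho_\infty$, where $\rho_\infty$ is an invariant measure~\eqref{eq: invariant 2}. A preliminary lemma (Lemma~\ref{lem: eqn f}) derives the equation satisfied by $f^\vep$, whose operators $\hat\L_0,\hat\L_1,\hat\L_2$ are structurally the same as your $\L_0,\L_1,\L_2$ but with the nonlinearity now appearing explicitly inside $\hat\L_2$ (including the extra term $\beta f p\cdot\nabla U\ast\rho_\infty(1-f)$). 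The multiscale expansion is then carried out on $f^\vep$, and at the end one undoes the substitution to recover the underdamped McKean--Vlasov Fokker--Planck equation for $\hat\rho=f\,\hat\rho_\infty$.

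The difference is mainly one of bookkeeping for the nonlinear convolution: in your approach the law $\rho^\vep_t$ enters as a coefficient of a linear backward equation and you must argue separately (as you note) that its $q$-marginal converges; in the paper's approach the forward equation is the natural evolution for the law itself, so the self-consistency is built in from the start and the factorisation $\rho_0\otimes\mu_z$ emerges automatically from $f_0=f(q,p,t)$ together with the product structure of $\rho_\infty$. Both derivations are formal and reach the same limit with $\gamma=\langle\lambda,A^{-1}\lambda\rangle$; your route is closer to the particle-system computation, while the paper's route keeps the McKean--Vlasov nonlinearity in view throughout.
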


The Fokker-Planck equation associated to the McKean SDE~\eqref{eq: scaledGLMV} is
\begin{eqnarray}
\label{eq: FPscaledGLMV}
\frac{\partial \rho^\vep }{\partial t}&=&\L^\ast\rho^\vep\notag
\\&=& -p\cdot \nabla_q \rho^\vep+(\nabla V(q)+\nabla_q U(q)\ast\rho^\vep_t)\cdot\nabla_p\rho^\vep+\frac{1}{\vep}\Big(-\lambda^T z\cdot\nabla_p\rho^\vep+p\lambda\cdot\nabla_z \rho^\vep\Big)\notag
\\&&\qquad +~\frac{1}{\vep^2}\Big(\div_p(Az\rho^\vep)+\beta^{-1}\div(A\nabla_p\rho^\vep)\Big)\notag
\\&:=&\bigg(\L_2^\ast+\frac{1}{\vep}\L_1^\ast+\frac{1}{\vep^2}\L_0^\ast\bigg)\rho^\vep.
\end{eqnarray}
According to Proposition~\ref{prop: characterization}, an invariant measure $\rho_\infty$ of the equation~\eqref{eq: scaledGLMV}, if it
exists, satisfies
\begin{equation}
\label{eq: invariant 2}
\rho_\infty(q,p,z)=\frac{\exp\Big[-\beta\,\Big(\frac{|p|^2}{2}+\frac{\|z\|^2}{2}+V(q)+U\ast\rho_\infty(q)\Big) \Big]}{\int \exp\Big[-\beta\,\Big(\frac{|p|^2}{2}+\frac{\|z\|^2}{2}+V(q)+U\ast\rho_\infty(q)\Big) \Big]\,dqdpdz}.
\end{equation}
We define the function $f^\vep(q,p,z,t)$ through
\begin{equation}
\label{eq: f}
\rho^\vep(q,p,z,t)=\rho_\infty(q,p,z)f^\vep(q,p,z,t).
\end{equation}
\begin{lemma} 
\label{lem: eqn f}
The function $f^\vep(q,p,z,t)$ defined in \eqref{eq: f} satisfies the
equation
\begin{align}
\label{eq: eqn f}
\frac{\partial f^\vep}{\partial t}&=-p\cdot\nabla_q f^\vep+(\nabla V(q)+\nabla_q U(q)\ast (f^\vep\rho_\infty))\cdot\nabla_p f^\vep+\beta f^\vep p\cdot\nabla U(q)\ast \rho_\infty(1-f^\vep)\notag
\\&\quad\quad+\frac{1}{\vep}\Big(-\lambda^Tz \cdot\nabla_p f^\vep+p\lambda\cdot\nabla_z f^\vep\Big)+\frac{1}{\vep^2}\Big(-Az\cdot\nabla_z f^\vep+\beta^{-1}\div_z(A\nabla_z f^\vep)\Big)\notag
\\&=:\Big(\hat{\L}_2+\frac{1}{\vep}\hat{\L}_1+\frac{1}{\vep^2}\hat{\L}_0\Big) f^\vep.
\end{align}
\end{lemma}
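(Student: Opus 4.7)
The plan is a direct substitution and product-rule calculation. Write $\rho^\vep = \rho_\infty f^\vep$ in~\eqref{eq: FPscaledGLMV} and exploit the fact that $\rho_\infty$ is time-independent and annihilated by the singular parts of $\L^\ast$. Concretely, from the explicit form~\eqref{eq: invariant 2} one reads off the key identities
\begin{equation*}
\nabla_p \rho_\infty = -\beta p\,\rho_\infty, \qquad \nabla_z \rho_\infty = -\beta z\,\rho_\infty, \qquad \nabla_q\rho_\infty = -\beta(\nabla V(q)+\nabla U\ast\rho_\infty(q))\,\rho_\infty.
\end{equation*}
Using these, one checks $\L_0^\ast \rho_\infty = 0$ (the $z$-marginal of $\rho_\infty$ is the Gaussian invariant measure of the Ornstein--Uhlenbeck generator in $z$) and $\L_1^\ast \rho_\infty = 0$ (the cross-term $-\lambda^Tz\cdot\nabla_p + p\lambda\cdot\nabla_z$ is the adjoint of an antisymmetric first-order operator that preserves the Hamiltonian $\frac{|p|^2}{2}+\frac{\|z\|^2}{2}$).

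First I would handle the singular terms. Apply the product rule:
\begin{equation*}
\L_0^\ast(\rho_\infty f^\vep) = f^\vep \L_0^\ast \rho_\infty + \rho_\infty \hat\L_0 f^\vep + 2\beta^{-1}\nabla_z f^\vep \cdot A\nabla_z\rho_\infty + f^\vep \div_z(A z\rho_\infty) - f^\vep\div_z(Az\rho_\infty),
\end{equation*}
where the last two bookkeeping terms cancel, and the cross term $2\beta^{-1} A\nabla_z\rho_\infty \cdot \nabla_z f^\vep = -2 A z\rho_\infty\cdot\nabla_z f^\vep$ combines with the transport piece to yield exactly $\rho_\infty \hat\L_0 f^\vep$ with $\hat\L_0 = -Az\cdot\nabla_z + \beta^{-1}\div_z(A\nabla_z\cdot)$. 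The same type of cancellation, driven by $\L_1^\ast\rho_\infty=0$ and the identities above, gives $\L_1^\ast(\rho_\infty f^\vep) = \rho_\infty\hat\L_1 f^\vep$ with $\hat\L_1 = -\lambda^T z\cdot\nabla_p + p\lambda\cdot\nabla_z$. Dividing the $O(\vep^{-2})$ and $O(\vep^{-1})$ contributions by $\rho_\infty$ produces the two singular terms of~\eqref{eq: eqn f}.

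The only subtle step is the transport block $\L_2^\ast$, because the coefficient $\nabla V+\nabla U\ast\rho^\vep$ depends on $\rho^\vep$, not on $\rho_\infty$. Expanding
\begin{equation*}
-p\cdot\nabla_q(\rho_\infty f^\vep) + (\nabla V + \nabla U\ast \rho^\vep)\cdot\nabla_p(\rho_\infty f^\vep)
\end{equation*}
via the product rule and using the gradients of $\rho_\infty$ above, the terms in which the derivative falls on $\rho_\infty$ produce a factor $\beta f^\vep \rho_\infty$ times
\begin{equation*}
p\cdot(\nabla V + \nabla U\ast\rho_\infty) - p\cdot(\nabla V + \nabla U\ast\rho^\vep) = -p\cdot \nabla U\ast(\rho^\vep - \rho_\infty) = p\cdot\nabla U\ast\bigl(\rho_\infty(1-f^\vep)\bigr),
\end{equation*}
since $\rho^\vep = f^\vep \rho_\infty$. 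After dividing by $\rho_\infty$ this is precisely the extra term $\beta f^\vep p\cdot\nabla U\ast\rho_\infty(1-f^\vep)$ appearing in~\eqref{eq: eqn f}, while the remaining pieces give the $\hat\L_2$-transport part.

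The main obstacle is purely bookkeeping: one has to verify carefully that the self-interaction asymmetry between $\nabla U\ast\rho^\vep$ and $\nabla U\ast\rho_\infty$ is the only source of the nonlinear correction and that all other cross-derivative terms from the product rule combine into the clean operators $\hat\L_0,\hat\L_1,\hat\L_2$. Once the identities $\L_0^\ast\rho_\infty=0$ and $\L_1^\ast\rho_\infty=0$ are invoked, the computation is entirely algebraic and yields~\eqref{eq: eqn f}.
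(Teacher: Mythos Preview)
Your proposal is correct and follows essentially the same approach as the paper: a direct substitution of $\rho^\vep=\rho_\infty f^\vep$ into~\eqref{eq: FPscaledGLMV} together with the product rule and the explicit gradients $\nabla_p\rho_\infty=-\beta p\rho_\infty$, $\nabla_z\rho_\infty=-\beta z\rho_\infty$, $\nabla_q\rho_\infty=-\beta(\nabla V+\nabla U\ast\rho_\infty)\rho_\infty$. The only cosmetic difference is that you organize the computation around the identities $\L_0^\ast\rho_\infty=0$ and $\L_1^\ast\rho_\infty=0$, whereas the paper expands each of $\L_0^\ast(\rho_\infty f^\vep)$, $\L_1^\ast(\rho_\infty f^\vep)$, $\L_2^\ast(\rho_\infty f^\vep)$ term by term; in both cases the nonlinear correction $\beta f^\vep p\cdot\nabla U\ast\rho_\infty(1-f^\vep)$ emerges from the discrepancy between $\nabla U\ast\rho^\vep$ and $\nabla U\ast\rho_\infty$ exactly as you describe.
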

\begin{proof}
For simplicity of notation, we drop the superscript $\vep$ on $f^\vep$. From the definition of $f$ and $\rho_\infty$, we compute
\begin{equation}
\label{eq: eqn1}
\frac{\nabla_z\rho}{\rho}=\frac{\nabla_z(f\rho_\infty)}{ f\rho_\infty}=\frac{f\nabla_z\rho_\infty+\rho_\infty\nabla_z f}{f\rho_\infty}=\frac{-\beta f\rho_\infty z+\rho_\infty\nabla_z f}{f\rho_\infty}=-\beta z+ \frac{\nabla_z f}{f}.
\end{equation}
Therefore, we obtain
\begin{align}
\label{eq: 1st term}
\L_0^\ast\rho=\L_0^\ast(\rho_\infty f)&=\div_z\bigg[A\rho\Big(z+\beta^{-1}\frac{\nabla_z\rho}{\rho}\Big)\bigg]\notag
\\&\overset{\eqref{eq: eqn1}}{=}\beta^{-1}\div_z\Big(\rho_\infty A\nabla_z f\Big)\notag
\\&=\beta^{-1}\Big(\nabla_z\rho_\infty\cdot A\nabla_z f+\rho_\infty\div_z(A\nabla_z f)\Big)\notag
\\&=\rho_\infty\Big(-Az\cdot\nabla_z f+\beta^{-1}\div_z(A\nabla_z f)\Big).
\end{align}
%
We proceed with $\L_1^\ast\rho$:
\begin{align}
\L_1^\ast\rho=\L_1^\ast(f\rho_\infty)&=-\lambda^T z\cdot \nabla_p(f\rho_\infty)+p\lambda\cdot \nabla_z (f\rho_\infty)\notag
\\&=\rho_\infty\Big(-\lambda^Tz \cdot\nabla_p f+p\lambda\cdot\nabla_z f\Big)+f(-\lambda^Tz\cdot\nabla_p\rho_\infty + p\lambda\cdot\nabla_z \rho_\infty)\notag
\\&=\rho_\infty\Big(-\lambda^Tz \cdot\nabla_p f+p\lambda\cdot\nabla_z f\Big).
\label{eq: 2nd term}
\end{align}
Finally we compute $\L_2^\ast\rho$:
\begin{align}
\L_2^\ast\rho&=\L_2^\ast(f\rho_\infty)\notag
\\&=-p\cdot\nabla_q(f\rho_\infty)+(\nabla V(q)+\nabla_q U(q)\ast\rho_t)\cdot\nabla_p (f\rho_\infty)\notag
\\&=-p\cdot(f\nabla_q\rho_\infty+\rho_\infty \nabla_q f)+(\nabla V(q)+\nabla_q U(q)\ast\rho_t)\cdot (f\nabla_p\rho_\infty+\rho_\infty\nabla_p f)\notag
\\&=\Big[-p\cdot\big(-\beta f (\nabla V(q)+\nabla U(q)\ast\rho_\infty)+\nabla_q f\big)+(\nabla V(q)+\nabla_q U(q)\ast\rho_t)\cdot\big(-\beta f p+\nabla_p f \big)\Big]\, \rho_\infty\notag
\\&=\Big[-p\cdot\nabla_q f+(\nabla V(q)+\nabla_q U(q)\ast\rho_t)\cdot\nabla_p f+\beta f p\cdot\nabla U(q)\ast(\rho_\infty-\rho_t)\Big]\,\rho_\infty.
\label{eq: 3rd term}
\end{align}
Substituting \eqref{eq: 1st term},\eqref{eq: 2nd term} and \eqref{eq: 3rd term} into \eqref{eq: FPscaledGLMV} we obtain
\begin{align*}
\frac{\partial f}{\partial t}&=-p\cdot\nabla_q f+(\nabla V(q)+\nabla_q U(q)\ast (f\rho_\infty))\cdot\nabla_p f+\beta f p\cdot\nabla U(q)\ast \rho_\infty(1-f)\notag
\\&\qquad+\frac{1}{\vep}\Big(-\lambda^Tz \cdot\nabla_p f+p\lambda\cdot\nabla_z f\Big)+\frac{1}{\vep^2}\Big(-Az\cdot\nabla_z f+\beta^{-1}\div_z(A\nabla_z f)\Big),
\end{align*}
which is equation~\eqref{eq: eqn f}. This completes the proof of the lemma.
\end{proof}
We are now ready to prove Proposition~\ref{prop: limits mean-field}.
\begin{proof}[Proof of Proposition \ref{prop: limits mean-field} ]
We use the perturbation expansion method similarly as the proof of Proposition~\ref{prop: limits particle}. We look for a solution to \eqref{eq: eqn f} of the form
$$
f^\vep=f_0+\vep f_1+\vep^2 f_2+\dots.
$$ 
Substituting this expansion into \eqref{eq: eqn f} and equating powers of $\vep$ we obtain the following sequence of equations
\begin{subequations}
\begin{eqnarray}
\hat{\L}_0 f_0&=&0\label{eq: order-minus-2},
\\ -\hat{\L}_0 f_1&=&\hat{\L}_1 f_0 \label{eq: order-minus-1},
\\ -\hat{\L}_0 f_2&=&\hat{\L}_2 f_0+\hat{\L}_1 f_1-\frac{\partial f_0}{\partial t}\label{eq: order-0},
\\ \dots&=&\dots \notag
\end{eqnarray}
\end{subequations}
It follows from equation \eqref{eq: order-minus-2} that $f_0$ is independent of $z$:
$$
f_0= f(q,p,t).
$$
We compute the right-hand side of \eqref{eq: order-minus-1}:
\begin{align*}
\hat{\L}_1 f_0=-\lambda^T z\cdot\nabla_p f.
\end{align*}
Thus equation \eqref{eq: order-minus-1} becomes
$$
\hat{\L}_0 f_1=\lambda^T z\cdot\nabla_p f.
$$
This equation satisfies the solvability condition since $\lambda^T z\cdot\nabla_p f$ is orthogonal to the null space of $\hat{\L}_0^\ast$ which consists of functions of the form $e^{-\beta \frac{\|z\|^2}{2}} u(q,p)$. Therefore, it has a unique solution, up to a term in the null space of $\hat{\L}_0^\ast$,
$$
f_1=-z A^{-1}\lambda\cdot\nabla_p f.
$$
From this expression we can compute
\begin{equation}
\label{eq: L1f1}
\hat{\L}_1 f_1= \lambda^T A^{-1}\lambda \Big(\|z\|^2 \Delta_p f-p\cdot\nabla_p f\Big).
\end{equation}
The solvability condition for equation \eqref{eq: order-0} is that its right-hand side must be orthogonal to the null of $\hat{\L}_0^\ast$, i.e.,
\begin{equation}
\label{eq: sol cond}
\int \Big(\hat{\L}_2 f+\hat{\L}_1 f_1-\frac{\partial f}{\partial t}\Big) Z^{-1} e^{-\beta \frac{\|z\|^2}{2}}\,dz=0.
\end{equation}
Since $\hat{\L}_2 f-\frac{\partial f}{\partial t}$ does not depends on $z$, we conclude from \eqref{eq: sol cond} that
\begin{equation}
\label{eq: eqn for f0-1}
\frac{\partial f}{\partial t}=\hat{\L}_2 f+\int  (\hat{\L}_1 f_1) Z^{-1} e^{-\beta \frac{\|z\|^2}{2}}\,dz.
\end{equation}
Since $f$ does not depend on $z$ and due to the separability of variables in $\rho_\infty$, the first term in the right-hand of \eqref{eq: eqn for f0-1} can be simplified to
$$
\hat{\L}_2 f=-p\cdot\nabla_q f+(\nabla V(q)+\nabla_q U(q)\ast (f\hat{\rho}_\infty))\cdot\nabla_p f+\beta f p\cdot\nabla U(q)\ast \hat{\rho}_\infty(1-f),
$$
where $\hat{\rho}_\infty$ satisfies
\begin{equation}
\label{eq: stationary2}
\hat{\rho}_\infty(q,p)=\frac{\exp\Big[-\beta\,\Big(\frac{|p|^2}{2}+V(q)+U\ast\hat{\rho}_\infty(q)\Big) \Big]}{\int \exp\Big[-\beta\,\Big(\frac{|p|^2}{2}+V(q)+U\ast\hat{\rho}_\infty(q)\Big) \Big]\,dqdp}.
\end{equation}
The second term in the right-hand side of \eqref{eq: eqn for f0-1} can be computed using \eqref{eq: L1f1}
$$
\int  (\hat{\L}_1 f_1) Z^{-1} e^{-\beta \frac{\|z\|^2}{2}}\,dz=\lambda^T A^{-1}\lambda\Big(\beta^{-1}\Delta_p f-p\cdot\nabla_p f\Big).
$$
Substituting the above computation back to \eqref{eq: eqn for f0-1}, we obtain that
$$
\frac{\partial f}{\partial t}=-p\cdot\nabla_q f+(\nabla V(q)+\nabla_q U(q)\ast (f\hat\rho_\infty))\cdot\nabla_p f+\beta f p\cdot\nabla U(q)\ast \hat\rho_\infty(1-f)+\lambda^T A^{-1}\lambda\Big(\beta^{-1}\Delta_p f-p\cdot\nabla_p f\Big).
$$
We define $\hat{\rho}(q,p,t)$ through
$$
\hat{\rho}(q,p,t)=f(q,p,t)\hat\rho_\infty(q,p).
$$
Then analogously as in Lemma \ref{lem: eqn f}, we conclude that $\hat{\rho}(q,p,t)$ satisfies the Fokker-Planck equation associated to the Vlasov-Fokker-Planck dynamics \eqref{eq: VFP SDE} with the friction coefficient $\gamma$ given by $\gamma=\lambda^T A^{-1}\lambda$. We thus complete the proof of Proposition \ref{prop: limits mean-field}.
\end{proof}
%
%
\section{Discussion and Future work}
\label{sec: discussion}
In this paper we studied the generalized McKean-Vlasov equation, that arises as the mean field limit of a system of interacting non-Markovian Langevin equations, under the assumption that the memory in the system can be described by introducing a finite number of auxiliary variables. We provided explicit formulas for the fundamental solution of the McKean-Vlasov equation for quadratic confining and interaction potentials, we studied the form of stationary states and we showed that no additional stationary states appear due to the memory in the system. Furthermore, we showed, under appropriate assumptions on the confining and interaction potentials, exponentially fast convergence to an equilibrium, we showed how the generalized McKean-Vlasov equation can be written in the GENERIC form and we studied the white noise limit.

As mentioned in the introduction section, the starting point of the present work is a system of weakly interacting diffusions in an extended phase space, where the finite number of auxiliary processes describe the memory of the system. Perhaps, a more natural starting point would be a system of weakly interacting particles that is coupled to one or more heat baths, e.g. the model studied in~\cite{EPR99}, but for weakly interacting nonlinear oscillators. Passing to the mean field limit and eliminating the heat bath variables are two operations that do not commute, in general. The rigorous study of the combined mean field and thermodynamic limits in, e.g. a Kac-Zwanzig type model~\cite{FKM65, GKS04} is an interesting and difficult problem that we will leave for further study. 

Other possible extensions of the present work include: first, the rigorous proof of a propagation of chaos result for non-Markovian interacting  Langevin equations in particular without the quasi-Markovianity assumption; second, the development of a complete existence and uniqueness of solutions theory for the generalized McKean-Vlasov equation; in addition, the study of non-Markovian interacting processes without the quasi-Markovianity assumption; and finally the study of the effect of coloured noise on the McKean-Vlasov dynamics, and in particular on the number and type of phase transitions.

\section* {\bf Acknowledgments}

GP is partially supported by the EPSRC under Grants No. EP/P031587/1, EP/L024926/1, and EP/L020564/1.

\bibliographystyle{alpha}
\bibliography{ref}
\end{document}